\documentclass[a4paper,11pt]{amsart}

\usepackage[english]{babel}

\usepackage[utf8]{inputenc}  
\usepackage[T1]{fontenc}
\usepackage{pifont}

\usepackage{palatino}

\usepackage{amsmath}
\usepackage{amssymb}
\usepackage{amsthm}
\usepackage{amscd}

\usepackage[abbrev,backrefs]{amsrefs}

\usepackage{mathtools}

\usepackage{tikz}

\newtheorem{theoreme}                        {Theorem} 
\newtheorem{proposition}{Proposition} [section]

\newtheorem{lemme}          {Lemma} [section]
\newtheorem{definition} {Definition} [section]
\theoremstyle{definition}
\newtheorem{exemple}    {Example} [section]

\theoremstyle{remark}


\renewcommand{\d}{\mathrm d}

\newcommand{\R}{\mathbb{R}} 
\newcommand{\N}{\mathbb{N}}

\newcommand{\Sp}{\mathbb{S}}

\newcommand{\Ha}{\mathcal{H}} 

\newcommand{\Newton}{\mathcal{N}}
\newcommand{\restr}[1]{{\lfloor_{_{#1}}}} 
\newcommand{\imset}[2]{\left\{{#1}:{#2}\right\}} 
\newcommand{\compset}[2]{\left\{{#1}~\vert~{#2}\right\}} 
\newcommand{\e}{\mathrm{e}}

\newcommand{\norm}[1]{\left\vert{#1}\right\vert}

\DeclareMathOperator{\diam}{diam}


\numberwithin{equation}{section}

\begin{document}

\title{A decomposition for Borel measures \(\mu \le \mathcal{H}^{s}\)}

\author{Antoine Detaille}
\address{
Antoine Detaille\hfill\break\indent
Université catholique de Louvain\hfill\break\indent
Institut de Recherche en Mathématique et Physique\hfill\break\indent
Chemin du cyclotron 2, L7.01.02\hfill\break\indent
1348 Louvain-la-Neuve, Belgium}
\email{Antoine.Detaille@student.uclouvain.be}

\author{Augusto C. Ponce}
\address{
Augusto C. Ponce\hfill\break\indent
Université catholique de Louvain\hfill\break\indent
Institut de Recherche en Mathématique et Physique\hfill\break\indent
Chemin du cyclotron 2, L7.01.02\hfill\break\indent
1348 Louvain-la-Neuve, Belgium}
\email{Augusto.Ponce@uclouvain.be}

\begin{abstract}
We prove that every finite Borel measure \( \mu \) in \( \R^N \) that is bounded from above by the Hausdorff measure \( \Ha^s \) can be split in countable many parts \( \mu\restr{E_k} \) that are bounded from above by the Hausdorff content \( \Ha^s_\infty \).
Such a result generalises a theorem due to R.~Delaware that says that any Borel set with finite Hausdorff measure can be decomposed as a countable disjoint union of straight sets.	
We apply this decomposition to show the existence of solutions of a Dirichlet problem involving an exponential nonlinearity.
\end{abstract}

\subjclass[2020]{Primary: 28A78; Secondary: 28A12}

\keywords{Hausdorff measure, Hausdorff content, straight set}

\maketitle{}


\section{Introduction and main result}

Let $0 \leq s < +\infty$ and \(N \in \N_\ast\) with \(\N_{\ast} \vcentcolon= \N\setminus\{0\}\). 
Our goal in this paper is to exploit the concepts of Hausdorff measure \(\Ha^{s}\) and content \(\Ha_{\infty}^{s}\) to obtain some better understanding about the finite Borel measures \(\mu\) in \(\R^{N}\) that satisfy the estimate 
\begin{equation}
\label{eqHaudorffEstimate}
\mu(E){}
\le \Ha^{s}(E)
\quad \text{for every Borel set \(E \subset \R^{N}\).}
\end{equation}
The definitions of \(\Ha^{s}\) and \(\Ha_{\infty}^{s}\) are recalled in Section~\ref{section2} and we implicitly assume throughout the paper that our measures are nonnegative.
Observe that the condition \(\mu \le \Ha^{s}\) above is not only satisfied by measures \(\mu\) of the form \(\mu = f \Ha^{s}\) with a measurable function \(f \le 1\), but also by any \(\mu\) such that \(\mu(E) = 0\) for every Borel set \(E \subset \R^{N}\) with finite \(\Ha^{s}\) measure.
In particular, one may have \(\mu = g \Ha^{t}\) with \(t > s\), regardless of the coefficient \(g\).{}

We want to know more precisely in what extent \(\Ha^{s}\) may be replaced in \eqref{eqHaudorffEstimate} by the smaller quantity that is \(\Ha_{\infty}^{s}\) and whether there is a unifying principle that would in particular cover the examples we mentioned above.{}
While one has \(\Ha_{\infty}^{s}(E) = 0\) if and only if \(\Ha^{s}(E) = 0\), it turns out that   \(\Ha_{\infty}^{s}\) and \(\Ha^{s}\) are quite different in general.
For example, \(\Ha_{\infty}^{s}\) is finite on every bounded subset of \(\R^{N}\), which is far from being the case for \(\Ha^{s}\) when \(s < N\).

They do coincide in some special cases of positive \(\Ha^{s}\) measure that are inherited from the case \(s = N\).
Indeed, for every \(\ell \in \{0, \dots, N\}\), if \( T \subset \R^N \) is an \(\ell\)-dimensional affine hyperplane and \( B_r(a) \subset \R^N \) is the open ball centered at \(a \in T \) with radius \( r > 0 \), then a straightforward argument gives
\[
	\Ha^\ell_\infty(B_r(a) \cap T) = \Ha^\ell(B_r(a) \cap T) = \omega_\ell r^\ell,
\]
where \(\omega_{\ell}\) denotes the volume of the \(\ell\)-dimensional unit ball.
One then deduces more generally that, for every bounded Borel set \( A \subset T \),
\begin{equation}
\label{sStraightEquation}
	\Ha^\ell_\infty(A) = \Ha^\ell(A);
\end{equation}
see Proposition~\ref{sstraightsubsets} below.
These are examples of \emph{straight sets}, a notion introduced by J.~Foran in~\cite{foran}:

\begin{definition}
\label{definitionStraightSet}
A Borel set $E \subset \R^N$ is said to be \emph{$s$-straight} whenever 
\[
	\Ha^s_\infty(E) = \Ha^s(E) < +\infty.
\]
\end{definition}

It is worth signalling that, the inequality $\Ha^s_\infty(E) \leq \Ha^s(E)$ being always satisfied, one only needs to check the reverse inequality when proving that a set is $s$-straight.
The concept of \(s\)-straight set carries an implicit idea of flatness as we have seen in~\eqref{sStraightEquation}, which is emphasised for instance by the fact that the unit sphere \( \Sp^{N-1} \) is not \((N-1)\)-straight since
\[
	\Ha^{N-1}_\infty(\Sp^{N-1}) = \omega_{N-1} \le \frac{1}{2} \Ha^{N-1}(\Sp^{N-1}).
\]

R.~Delaware established in~\cite{delaware}*{Theorem~5} the surprising property that \(s\)-straight sets are the building blocks of any Borel set with finite \(\Ha^{s}\) measure, whence settling a conjecture made by J.~Foran.
More precisely, he proved the following

\begin{theoreme}
\label{thmdelaware}
If $E \subset \R^N$ is a Borel set of finite $\Ha^s$ measure, then there exists a sequence of disjoint Borel sets $(E_n)_{n \in \N}$ such that $E = \bigcup\limits_{n \in \N} E_n$ and $E_n$ is $s$-straight for each $n \in \N$.
\end{theoreme}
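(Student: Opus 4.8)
The plan is to reduce the statement to a single extraction lemma by means of a greedy exhaustion, the backbone being the following \emph{hereditary property}: every Borel subset of an $s$-straight set is again $s$-straight. Indeed, if $E$ is $s$-straight and $A \subset E$ is Borel, then combining the additivity of $\Ha^s$ on the disjoint Borel sets $A$ and $E \setminus A$, the subadditivity of $\Ha^s_\infty$, and the pointwise inequality $\Ha^s_\infty \le \Ha^s$, one gets
\begin{align*}
\Ha^s(A) + \Ha^s(E \setminus A) &= \Ha^s(E) = \Ha^s_\infty(E)\\
&\le \Ha^s_\infty(A) + \Ha^s_\infty(E \setminus A) \le \Ha^s(A) + \Ha^s(E \setminus A).
\end{align*}
Equality must therefore hold throughout, which forces $\Ha^s_\infty(A) = \Ha^s(A)$; that is, $A$ is $s$-straight. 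In particular, since any $\Ha^s$-null set is trivially $s$-straight, the decomposition we seek is insensitive to sets of measure zero.

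The heart of the matter is the following extraction lemma, which I isolate as the main obstacle: \emph{every Borel set $E \subset \R^N$ with $0 < \Ha^s(E) < +\infty$ contains an $s$-straight Borel subset $F$ with $\Ha^s(F) > 0$}. Granting this, I would conclude by exhaustion. Set $E_0 = E$ and, recursively, let
\[
c_n = \sup\{\Ha^s(F) : F \subset E_{n} \text{ Borel and } s\text{-straight}\},
\]
choose an $s$-straight $F_{n+1} \subset E_n$ with $\Ha^s(F_{n+1}) \ge \frac12 c_n$, and put $E_{n+1} = E_n \setminus F_{n+1}$. The sets $F_n$ are pairwise disjoint and $s$-straight, and since $\sum_n \Ha^s(F_n) \le \Ha^s(E) < +\infty$ we have $\Ha^s(F_n) \to 0$, whence $c_n \to 0$. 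Let $R = E \setminus \bigcup_n F_n$ be the remainder. If $\Ha^s(R) > 0$, then the extraction lemma applied to $R \subset E_n$ would produce an $s$-straight subset of positive measure, contradicting $c_n \to 0$; thus $\Ha^s(R) = 0$, so $R$ is $s$-straight, and $E = R \cup \bigcup_n F_n$ is the desired countable disjoint decomposition into $s$-straight sets.

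It remains to address the extraction lemma, which is where the real work lies. The difficulty is that straightness requires the inequality in the \emph{unfavourable} direction $\Ha^s_\infty(F) \ge \Ha^s(F)$: a Frostman-type density estimate only yields $\Ha^s(F) \le 2^s \Ha^s_\infty(F)$, and ball-filling arguments control $\Ha^s_\infty$ from above rather than from below. My approach would be to build $F$ as the intersection of a nested sequence $A_0 \supset A_1 \supset \cdots$ of positive-measure Borel subsets of $E$ whose relative defect $\Ha^s(A_k) - \Ha^s_\infty(A_k)$ is driven to zero while $\Ha^s(A_k)$ stays bounded below, presumably through an inductive refinement over dyadic scales exploiting the superadditivity of the defect $E \mapsto \Ha^s(E) - \Ha^s_\infty(E)$.

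The step I expect to be the most delicate is precisely the passage to the limit. While $\Ha^s(F) = \lim_k \Ha^s(A_k)$ by continuity of the finite measure $\Ha^s$ along the decreasing sequence, the inclusion $F \subset A_k$ only gives $\Ha^s_\infty(F) \le \Ha^s_\infty(A_k)$, so the favourable lower bounds on the contents $\Ha^s_\infty(A_k)$ need not survive intersection; the Hausdorff content simply does not pass to decreasing limits from below. Controlling $\Ha^s_\infty(F)$ from below along the construction — so that the near-equalities at each finite stage crystallise into the \emph{exact} identity $\Ha^s_\infty(F) = \Ha^s(F)$ in the limit — is the crux of the argument, and is exactly the phenomenon that makes Delaware's result surprising.
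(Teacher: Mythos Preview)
Your reduction to an extraction lemma plus greedy exhaustion is exactly the architecture the paper uses (it proves the theorem by applying its Theorem~\ref{thmdecomposition} to $\mu = \Ha^s\restr{E}$, and that theorem in turn is Proposition~\ref{thm4} plus the same exhaustion you wrote down). The hereditary property and the exhaustion step are correct and essentially verbatim from the paper.

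The gap is precisely where you placed it: you have not proved the extraction lemma, and the route you sketch---driving the defect $\Ha^s(A_k) - \Ha^s_\infty(A_k)$ to zero along a nested sequence and hoping straightness survives in the intersection---runs into the obstacle you yourself identify, namely that $\Ha^s_\infty$ has no lower semicontinuity along decreasing sequences. This is not a technicality to be patched; it is the wrong invariant to carry through the construction. The paper (following Delaware/Preiss) sidesteps the issue entirely by never estimating $\Ha^s_\infty(F)$ from below. Instead it uses the ball characterisation of Proposition~\ref{characsstraightmes}: $\mu\restr{F}$ is $s$-straight iff $\mu(F \cap B_r(x)) \le \omega_s r^s$ for \emph{every} ball. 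One first extracts a set $A$ on which $\mu\restr{A}(B_r(x)) \le (1+\varepsilon_j)\omega_s r^s$ for $r \le \delta_j$ (this is Proposition~\ref{prop1415}), and then, at each scale $j$, partitions $A$ into small pieces $A_{i,j}$ and \emph{thins} each piece to a subset $E_{i,j}$ of mass exactly $(1-\varepsilon_{j-1})\mu(A_{i,j})$ via the intermediate-value Proposition~\ref{intermediatevaluemes}. The set $F = \bigcap_j \bigcup_i E_{i,j}$ then satisfies the ball bound at every radius: for $r$ in the range $[r_j, r_{j-1})$, the factor $(1-\varepsilon_{j-1})$ from the thinning cancels the $(1+\varepsilon_{j-1})$ overshoot from the density estimate. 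The point is that the ball inequality $\mu(F \cap B_r(x)) \le \omega_s r^s$ \emph{does} pass to intersections---monotonicity goes the right way---whereas a lower bound on $\Ha^s_\infty$ does not. Replace your defect-minimisation plan with this ball-by-ball verification and the lemma goes through.
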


A generalisation of the notion of \(s\)-straight set to Borel measures can be achieved based on the observation that every subset of an \(s\)-straight set is also \(s\)-straight by Proposition~\ref{sstraightsubsets} below.
This property implies that a Borel set $E \subset \R^N$ of finite $\Ha^s$ measure is $s$-straight if and only if
\[{}
\Ha^{s}(E \cap A) \le \Ha^s_\infty(A){}
\quad \text{for every Borel set \(A \subset \R^{N}\).}
\]
Notice that this condition still makes sense if we replace $\Ha^s$ by a Borel measure $\mu$.
Inspired by the strategy from~\cite{delaware}, we prove that Delaware's theorem has a valid counterpart in this larger framework : 
\begin{theoreme}
\label{thmdecomposition}
If $\mu$ is a finite Borel measure on $\R^N$ such that $\mu \leq \Ha^s$, then there exists a sequence of disjoint Borel sets $(E_n)_{n \in \N}$ such that $\R^N = \bigcup\limits_{n \in \N} E_n$ and, for every \(n \in \N\), 
\[{}
\mu(E_{n} \cap A)
\le \Ha^{s}_{\infty}(A)
\quad \text{for every Borel set \(A \subset \R^{N}\).}
\]
\end{theoreme}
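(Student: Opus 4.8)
The plan is to reformulate the desired conclusion, reduce it to a covering statement, and then split \(\R^N\) according to the upper \(s\)-density of \(\mu\).

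First, observe that a Borel set \(F\) satisfies \(\mu(F \cap A) \le \Ha^s_\infty(A)\) for every Borel \(A\) if and only if \(\mu(B) \le \Ha^s_\infty(B)\) for every Borel \(B \subset F\); indeed, given the latter and any Borel \(A\), monotonicity of \(\Ha^s_\infty\) yields \(\mu(F \cap A) \le \Ha^s_\infty(F \cap A) \le \Ha^s_\infty(A)\). Write \(\mathcal{S}\) for the collection of Borel sets with this property. The family \(\mathcal{S}\) is hereditary (a Borel subset of a member is again a member) and contains every \(\mu\)-null Borel set. Consequently, it suffices to cover \(\R^N\) by countably many members of \(\mathcal{S}\): once this is done, replacing the \(n\)-th set by what remains after removing the previous ones keeps it in \(\mathcal{S}\) by heredity and produces the required disjoint sequence \((E_n)_{n \in \N}\).

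Next, I would split \(\R^N = M \cup M^c\) using the upper \(s\)-density \(\Theta^{*s}(\mu,x) = \limsup_{r \to 0^+} \mu(\overline{B_r(x)})/r^s\), where \(M = \{x : \Theta^{*s}(\mu,x) > 0\}\). A classical estimate on the upper density shows that \(\{x : \Theta^{*s}(\mu,x) \ge \lambda\}\) has finite \(\Ha^s\) measure for each \(\lambda > 0\), controlled by \(\mu(\R^N)/\lambda\); hence \(M\) is \(\sigma\)-finite for \(\Ha^s\) and may be written as a countable disjoint union of Borel sets \(M_j\) with \(\Ha^s(M_j) < +\infty\). Applying Delaware's Theorem~\ref{thmdelaware} to each \(M_j\) decomposes it into countably many \(s\)-straight Borel sets. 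Each such straight piece \(P\) belongs to \(\mathcal{S}\): for every Borel \(B \subset P\), Proposition~\ref{sstraightsubsets} gives that \(B\) is again \(s\)-straight, so that \(\mu(B) \le \Ha^s(B) = \Ha^s_\infty(B)\) using \(\mu \le \Ha^s\). This disposes of \(M\).

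It remains to cover \(M^c = \{x : \Theta^{*s}(\mu,x) = 0\}\), which is where I expect the main difficulty to lie, since these sets need not be \(\sigma\)-finite for \(\Ha^s\) and Delaware's theorem no longer applies; here the finiteness of \(\Ha^s_\infty\) on bounded sets must be exploited directly. Fix a small constant \(\varepsilon > 0\), to be chosen depending only on \(s\) and \(N\), and for \(m \in \N_\ast\) set \(B_m = \{x \in M^c : \mu(\overline{B_r(x)}) \le \varepsilon r^s \text{ for all } 0 < r \le 1/m\}\). Since \(\Theta^{*s}(\mu,\cdot) = 0\) on \(M^c\), the Borel sets \(B_m\) increase to \(M^c\). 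Intersecting each \(B_m\) with the cubes of a grid of mesh at most \(1/(2m)\) produces countably many Borel sets \(F = B_m \cap Q\), still covering \(M^c\), each of diameter at most \(1/(2m)\). I claim every such \(F\) lies in \(\mathcal{S}\). By the first paragraph it is enough to bound \(\mu(F \cap U)\) by the term \(c_s(\diam U)^s\) contributed by \(U\) in the sums defining \(\Ha^s_\infty\), for an arbitrary \(U\) with \(F \cap U \ne \emptyset\): choosing \(x_0 \in F \cap U\) gives \(U \subset \overline{B_{\diam U}(x_0)}\), so for \(\diam U \le 1/(2m)\) the density bound yields \(\mu(F \cap U) \le \varepsilon (\diam U)^s\), which is at most \(c_s(\diam U)^s\) once \(\varepsilon\) is small; while for \(\diam U > 1/(2m)\) the estimate \(\mu(F \cap U) \le \mu(F) \le \varepsilon (\diam F)^s\), obtained from the density bound applied to a ball of radius \(\diam F \le 1/(2m)\) about a point of \(F\), is again dominated by \(c_s(\diam U)^s\). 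Summing over any cover of a Borel set \(A\) then gives \(\mu(F \cap A) \le \Ha^s_\infty(A)\), as required. Assembling the countable families coming from \(M\) and from \(M^c\) yields a countable cover of \(\R^N\) by members of \(\mathcal{S}\), and disjointifying as above completes the argument; the crux is the treatment of the zero-density part \(M^c\), where extracting small-diameter pieces on which \(\mu\) obeys the scale-invariant bound \(\mu(\overline{B_r(x)}) \le \varepsilon r^s\) is precisely what lets \(\Ha^s_\infty\), rather than \(\Ha^s\), control \(\mu\).
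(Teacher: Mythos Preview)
Your argument is correct and takes a genuinely different route from the paper. The paper never invokes Delaware's Theorem~\ref{thmdelaware}; instead it establishes Proposition~\ref{thm4} (every nonzero finite \(\mu \le \Ha^s\) admits a Borel set \(E\) of positive \(\mu\)-measure with \(\mu\restr{E} \le \Ha^s_\infty\)) directly, by a Preiss--Delaware style construction built on Proposition~\ref{prop1415}, and then obtains Theorem~\ref{thmdecomposition} by a greedy exhaustion. Delaware's theorem is recovered only afterwards as a corollary. By contrast, you split \(\R^N\) via the upper \(s\)-density of \(\mu\): on \(M=\{\Theta^{*s}(\mu,\cdot)>0\}\), which is \(\sigma\)-finite for \(\Ha^s\), you apply the already-known Delaware theorem together with \(\mu \le \Ha^s\); on \(M^c\) you use the scale-invariant bound \(\mu(\overline{B_r(x)}) \le \varepsilon r^s\) on small-diameter grid pieces to get the ball estimate of Proposition~\ref{characsstraightmes}. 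Your approach is arguably more transparent once Delaware's result is taken as input, and it isolates cleanly where the ``new'' work lies (the zero-density part). The paper's approach, on the other hand, is self-contained---it does not presuppose Delaware's theorem---and its key Proposition~\ref{thm4} feeds directly into the non-\(\sigma\)-finite Theorem~\ref{thmdecomposition2} via Zorn's lemma, without any density dichotomy.

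Two small points to tighten. First, your phrasing ``the term \(c_s(\diam U)^s\) contributed by \(U\)'' suggests the arbitrary-set definition of \(\Ha^s_\infty\), whereas the paper uses the spherical one with balls \(B_r\) and weights \(\omega_s r^s\); in that normalization your computation goes through with \(\varepsilon \le \omega_s/2^s\), since for \(x_0 \in F \cap B_r(x)\) one has \(B_r(x) \subset \overline{B_{2r}(x_0)}\). Second, the sets \(B_m\) are indeed Borel (in fact closed): the map \(x \mapsto \mu(\overline{B_r(x)})\) is upper semicontinuous, and by right-continuity in \(r\) one may intersect over rational \(r \in (0,1/m]\).
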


One deduces Theorem~\ref{thmdelaware} by taking as \(\mu\) the restriction of \(\Ha^{s}\) to \(E\).{}
We would like to stress that Theorem~\ref{thmdecomposition} applies to measures that are not necessarily of Hausdorff nature, for instance they can be merely diffuse with respect to some capacity. 
In this respect, Theorem~\ref{thmdecomposition} cannot be obtained as a consequence of the decomposition given by Theorem~\ref{thmdelaware}.

Aside from the natural generalisation of Delaware's theorem, our motivation for establishing Theorem~\ref{thmdecomposition} is that it provides
one with an elegant tool to prove the existence of a solution in the sense of distributions for the  Dirichlet problem 
\begin{equation}
	\label{eqPDE-BLOP}
\left\{
\begin{alignedat}{2}
	- \Delta u + (\mathrm{e}^{u} - 1) & = \nu &&\quad \text{in \(\Omega\),}\\
	u & = 0 &&\quad \text{on \(\partial\Omega\),}
\end{alignedat}
\right.
\end{equation}
where  \(\Omega \subset \R^{N}\), \(\nu\) is a finite Borel measure in \(\Omega\) and \(\Delta = \sum\limits_{i = 1}^{N}{\partial^{2}/\partial x_{i}^{2}}\) denotes the classical Laplacian.
It has been proved by J.~L.~Vázquez~\cite{vazquez} for \(N = 2\) and by D.~Bartolucci et al.~\cite{BLOP} for \(N \ge 3\) that \eqref{eqPDE-BLOP} always admits a solution when 
\begin{equation}
	\label{eq-BLOP}
	\nu \le 4\pi \Ha^{N - 2}.{}
\end{equation}
Using Theorem~\ref{thmdecomposition} we provide a new proof of these results:

\begin{theoreme}
	\label{thmBLOP}
	Let \(N \ge 2\) and \(\Omega \subset \R^{N}\) be a smooth bounded open set.{}
	If \(\nu\) is a finite measure in \(\Omega\) that satisfies \eqref{eq-BLOP}, then there exists a function \(u\) in the Sobolev space \(W_{0}^{1, 1}(\Omega)\) such that \(\mathrm{e}^{u} \in L^{1}(\Omega)\) and
	\begin{equation}
		\label{eq-Distributions}
		- \Delta u + (\mathrm{e}^{u} - 1)  = \nu \quad \text{in the sense of distributions in \(\Omega\).}
	\end{equation}
\end{theoreme}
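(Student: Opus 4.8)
The plan is to run the classical approximation scheme for measure-data absorption problems and to use Theorem~\ref{thmdecomposition} precisely where it is needed, namely to prevent the nonlinear term from concentrating in the limit. Normalising \(\mu \vcentcolon= \nu/(4\pi)\), so that \(\mu \le \Ha^{N-2}\), I would first apply Theorem~\ref{thmdecomposition} with \(s = N-2\) to obtain disjoint Borel sets \((E_n)_{n \in \N}\) covering \(\R^N\) with
\[
\nu(E_n \cap A) \le 4\pi\,\Ha^{N-2}_\infty(A) \quad \text{for every Borel set } A \subset \R^N .
\]
Each piece \(\nu\restr{E_n}\) is thus bounded in terms of the Hausdorff \emph{content}, and through the elementary estimate \(\Ha^{N-2}_\infty(B_r(x)) \le C r^{N-2}\) this yields a Morrey-type growth condition on every ball — the form of the hypothesis that can actually be fed into a potential estimate, as opposed to the bound by \(\Ha^{N-2}\) itself.

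Next I would regularise: choose nonnegative \(\nu_k \in C_c^\infty(\Omega)\) with \(\nu_k \to \nu\) narrowly and \(\int_\Omega \nu_k \le \nu(\Omega)\), built by mollifying the pieces \(\nu\restr{E_n}\) separately so as to inherit their content growth. Since \(t \mapsto \mathrm{e}^{t}-1\) is smooth, nondecreasing and vanishes at \(0\), the regularised problem \(-\Delta u_k + (\mathrm{e}^{u_k}-1) = \nu_k\) with \(u_k = 0\) on \(\partial\Omega\) has a unique solution, with \(0 \le u_k\) by the maximum principle; existence follows from monotone operator theory or from the ordered sub- and supersolutions \(0\) and the Green potential of \(\nu_k\). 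Integrating the equation over \(\Omega\) and using that the outward normal derivative of the nonnegative function \(u_k\) is nonpositive on \(\partial\Omega\) gives the fundamental bound
\[
\int_\Omega (\mathrm{e}^{u_k}-1) \le \nu_k(\Omega) \le \nu(\Omega),
\]
whence \(\mathrm{e}^{u_k}\) is bounded in \(L^1(\Omega)\) and, by the Boccardo--Gallou\"et measure-data estimates, \(u_k\) is bounded in \(W^{1,q}_0(\Omega)\) for every \(q < N/(N-1)\). Along a subsequence \(u_k \to u\) in \(W^{1,1}_0(\Omega)\) and a.e., while \(\mathrm{e}^{u_k}-1 \rightharpoonup \lambda\) weakly-\(*\) as measures, so that \(-\Delta u + \lambda = \nu\) holds distributionally with \(\mathrm{e}^{u}\in L^1(\Omega)\) and \(\mathrm{e}^{u}-1 \le \lambda\) by Fatou. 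Everything then reduces to showing \(\lambda = \mathrm{e}^{u}-1\), that is, that no mass is lost to concentration.

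This is where the content bound does the real work, and where I expect the main obstacle to lie. By Vitali's theorem it suffices to show that \(\{\mathrm{e}^{u_k}\}\) is equi-integrable, i.e.\ that for every \(\varepsilon > 0\) there is \(\delta > 0\) with \(\int_A \mathrm{e}^{u_k} \le \varepsilon\) whenever \(|A| < \delta\), uniformly in \(k\). I would split the measure along the decomposition: fix \(n_0\) so large that \(\tau \vcentcolon= \nu\restr{\bigcup_{n > n_0} E_n}\) has \(\tau(\Omega) < \varepsilon\), and set \(\sigma \vcentcolon= \nu\restr{\bigcup_{n \le n_0} E_n}\). The tail \(\tau\) is harmless: having arbitrarily small total mass it cannot create concentration, and its effect on \(\{\mathrm{e}^{u_k}\}\) is absorbed into \(\varepsilon\) by a routine comparison and perturbation argument. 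The main part \(\sigma\), however, sits exactly at the critical content constant \(4\pi\) and \emph{cannot} be handled by the linear comparison \(u_k \le G^\Omega\nu_k\) alone, since at the threshold the exponential of the Green potential need not be integrable (already for a single mass \(4\pi\,\delta_x\)). One is forced to exploit the absorption directly: testing the equation against a suitable nondecreasing function of \(u_k\) and using the Morrey bound \(\sigma(B_r(x)) \le C r^{N-2}\) should yield a Moser--Trudinger--Adams-type exponential estimate for \(u_k\), uniform in \(k\) and equi-integrable. This sharp nonlinear potential estimate at the constant \(4\pi\) is the heart of the matter. Granting it, combining the two parts gives equi-integrability of \(\{\mathrm{e}^{u_k}\}\), hence \(\mathrm{e}^{u_k} \to \mathrm{e}^{u}\) in \(L^1(\Omega)\), so \(\lambda = \mathrm{e}^{u}-1\) and \(u\) is the desired solution of \eqref{eq-Distributions}.
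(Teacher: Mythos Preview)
Your skeleton is sound and you correctly locate the crux: passing to the limit in the nonlinear term at the critical constant \(4\pi\). But the resolution you sketch is a gap, not a proof. Two concrete points.

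First, the finite sum \(\sigma = \sum_{n \le n_0} \nu\restr{E_n}\) does \emph{not} satisfy \(\sigma \le 4\pi\,\Ha^{N-2}_\infty\); each piece does, but adding \(n_0+1\) of them only gives \(\sigma \le 4\pi(n_0+1)\,\Ha^{N-2}_\infty\), which is useless for a potential estimate. The paper repairs this by replacing each \(E_n\) with a compact subset \(F_n\) via inner regularity and then exploiting that finitely many \emph{disjoint} compacts are at positive mutual distance: a ball of radius at most \(\eta \vcentcolon= \tfrac{1}{2}\min_{\alpha \ne \beta}\d(F_\alpha, F_\beta)\) meets at most one \(F_n\), whence \(\nu\restr{\bigcup_{n \le k} F_n} \le 4\pi\,\Ha^{N-2}_{\eta}\) genuinely. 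This is the content of Lemma~\ref{lemmaDecompositionLocal}, and it is not automatic from Theorem~\ref{thmdecomposition} alone.

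Second, and more importantly, you try to work \emph{at} the threshold \(4\pi\), where \eqref{eqBLOP-Estimate} can fail, and then appeal to an unspecified ``Moser--Trudinger--Adams-type'' bound exploiting the absorption; you yourself write ``granting it''. The paper sidesteps this entirely: it multiplies the data by \(\beta_j \nearrow 1\), so that \(\beta_j\,\nu\restr{\Omega \setminus U_j} \le 4\pi\beta_j\,\Ha^{N-2}_{\delta_j}\) with \(4\pi\beta_j < 4\pi\), and Lemma~\ref{lemmaBLOP} then produces a solution \(w_j\) via the \emph{subcritical} estimate \eqref{eqBLOP-Estimate} and dominated convergence. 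Because the data \(\beta_j\,\nu\restr{\Omega \setminus U_j}\) increase with \(j\), comparison makes \((w_j)_{j \in \N}\) nondecreasing, and Fatou's lemma (not Vitali) handles the limit of \(\e^{w_j}\). Your equi-integrability route is not wrong in spirit, but the sharp nonlinear estimate you would need at \(4\pi\) is precisely what the \(\beta_j\) trick plus monotonicity is designed to avoid.
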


The original proof in \cite{BLOP} is quite involved and is based on approximation schemes of both the exponential term and the measure in the equation. 
Moreover, the manipulation of the measure itself relies on a technical decompostion lemma in that paper and the notion of reduced measure introduced in \cite{BMP}.
In contrast, the argument we present in Section~\ref{sectionBLOP} is more transparent as it relies solely on the equation we wish to solve and on an approximation of the measure that is implicity provided by Theorem~\ref{thmdecomposition}; see Lemma~\ref{lemmaDecompositionLocal} below.

A standard argument in Measure Theory extends Theorem~\ref{thmdecomposition} for \(\sigma\)-finite measures \(\mu\).
One may wonder whether it is possible to have a counterpart of Theorem~\ref{thmdecomposition} that includes the non \(\sigma\)-finite case.
To achieve this one should allow the possibility of dealing with an uncountable family of Borel sets that is suitably chosen in terms of \(\mu\).
Using Zorn's lemma, we show that

\begin{theoreme}
\label{thmdecomposition2}
If \(\mu\) is a Borel measure on \(\R^N\) such that \( \mu \leq \Ha^s \), then \(\R^N\) may be written as a disjoint union of Borel sets 
\( \R^N = \bigcup\limits_{i \in I} E_i \cup F \)
where, for each \( i \in I \), we have \( 0 < \mu(E_i) < +\infty \) and, for every Borel set \( A \subset \R^N \), 
\[
	\mu(E_i \cap A) \leq \Ha^s_\infty(A) \quad \text{and} \quad \mu(F \cap A) \in \{0,+\infty\}.
\]
\end{theoreme}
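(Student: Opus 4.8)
The plan is to produce the family $\{E_i\}_{i \in I}$ by a maximality argument and then read off $F$ as the complement of their union. Call a Borel set $E \subset \R^N$ \emph{admissible} if $0 < \mu(E) < +\infty$ and $\mu(E \cap A) \le \Ha^s_\infty(A)$ for every Borel set $A \subset \R^N$. I would order the collection of all pairwise disjoint families of admissible sets by inclusion; the union of a chain of such families is again a pairwise disjoint family of admissible sets, so Zorn's lemma yields a maximal family $\{E_i\}_{i \in I}$. Each $E_i$ then satisfies the two required conditions by the definition of admissibility, and maximality means precisely that no admissible set is disjoint from every $E_i$.

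Granting for the moment that $F := \R^N \setminus \bigcup_{i \in I} E_i$ is a Borel set, the remaining property is immediate. For any Borel $B \subseteq F$ one must have $\mu(B) \notin (0, +\infty)$: otherwise $\mu\restr{B}$ would be a finite Borel measure dominated by $\Ha^s$, and Theorem~\ref{thmdecomposition} applied to $\mu\restr{B}$ would produce disjoint Borel sets $(G_n)$ with $\mu\restr{B}(G_n \cap A) \le \Ha^s_\infty(A)$ for all $A$; since $\sum_n \mu(B \cap G_n) = \mu(B) > 0$, some piece $E := B \cap G_n$ would be admissible and, being contained in $B \subseteq F$, disjoint from every $E_i$, contradicting maximality. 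Taking $B = F \cap A$ for an arbitrary Borel set $A$ gives $\mu(F \cap A) \in \{0, +\infty\}$, and discarding vanishing indices is unnecessary since admissibility already forces $\mu(E_i) > 0$.

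The main obstacle is exactly the measurability of $F$: when $\mu$ is not $\sigma$-finite the index set $I$ is genuinely uncountable, so $\bigcup_{i \in I} E_i$ is an uncountable union of Borel sets and need not be Borel, whence $F$ as defined above may fail to be Borel. To repair this I would first observe that the same extraction argument forces every point $y$ left outside $\bigcup_i E_i$ to satisfy $\mu(\{y\}) = 0$ — automatic for $s > 0$ because $\Ha^s(\{y\}) = 0$, and otherwise because $\{y\}$ would itself be admissible and could be adjoined to the family. More generally every Borel set disjoint from all the $E_i$ has $\mu$-measure in $\{0, +\infty\}$, so the leftover splits into a part on which $\mu$ is purely infinite and a $\mu$-negligible part. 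Since enlarging an admissible set by a $\mu$-null set alters neither its measure nor its content bound, the negligible part can be absorbed into the sets $E_i$ without destroying admissibility, after which $\bigcup_{i \in I} E_i$ becomes exactly the complement of a genuine Borel set $F$ carrying the purely infinite behaviour. Making this absorption rigorous — in particular arranging that the family is chosen so that the negligible leftover is covered by Borel null sets matched to the $E_i$ — is where I expect the real work to lie.
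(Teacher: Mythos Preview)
Your argument is essentially the paper's: both apply Zorn's lemma to pairwise disjoint families of Borel sets $E$ with $0 < \mu(E) < +\infty$ and $\mu\restr{E} \le \Ha^s_\infty$, take $F$ to be the complement of the union of a maximal family, and derive the $\{0,+\infty\}$ dichotomy by contradiction. The only minor difference is that the paper extracts the contradicting admissible piece directly via Proposition~\ref{thm4} rather than by invoking the full Theorem~\ref{thmdecomposition} as you do; either works.

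The measurability issue you flag --- that $\bigcup_{i \in I} E_i$ need not be Borel when $I$ is uncountable, so that $F$ may fail to be a Borel set --- is a genuine concern, and the paper's own proof does not address it: it simply writes $F = \R^N \setminus \bigcup_{i \in I} E_i$ and proceeds as though this were Borel. So your proposal matches the paper's argument step for step, and the gap you worry about is one the paper itself shares rather than one you have introduced. Your sketched repair (absorbing a $\mu$-null remainder into the $E_i$) goes further than anything the paper attempts, though as you acknowledge it is not yet rigorous.
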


Note that if \(\mu\) is \(\sigma\)-finite, then the index set \(I\) must be countable and \(F\) is negligible with respect to \( \mu \).
We thus recover Theorem~\ref{thmdecomposition} for \(\sigma\)-finite measures.

In the next section, we explore in more detail the generalisation of \(s\)-straight sets to Borel measures that we announced above.
Then, in Section~\ref{section3} we develop some tools in preparation for the proofs of Theorems~\ref{thmdecomposition} and~\ref{thmdecomposition2} that we present in Section~\ref{section4}.

\section{From straight sets to measures}
\label{section2}

We begin by recalling the definition~\cite{hausdorff} of the \emph{Hausdorff measure} \(\Ha^s\) of dimension $s$ of a set \(E \subset \R^{N}\) as
\begin{equation}
	\label{eqHaudorffMeasure}
	\Ha^s(E) = \lim_{\delta \to 0} \Ha^s_\delta(E) = \sup_{\delta > 0}\Ha^s_\delta(E),
\end{equation}
where we take the \emph{Hausdorff capacity} $\Ha^s_\delta$ defined for $E \subset \R^N$ by
\[
	\Ha^s_\delta(E) = \inf\imset{\sum_{n \in \N} \omega_s r_n^s}{E \subset \bigcup_{n \in \N} B_{r_n}(x_n),~ 0 \leq r_n \leq \delta}.
\]
We use here the normalization constant
\(
	\omega_s = {\pi^{\frac{s}{2}}}/{\Gamma(\frac{s}{2} + 1)}
\)
that gives the volume of the $s$-dimensional unit ball when \(s\) is an integer.
Since the quantity $\Ha^s_\delta(E)$ increases as $\delta$ decreases, the limit in \eqref{eqHaudorffMeasure} always exists in \([0, +\infty]\) and also
\[{}
\Ha^s_\infty(E) \le \Ha^s_\delta(E) \le \Ha^s(E).
\]

The goal of this section is to study in more detail those measures satisfying \( \mu \leq \Ha^s \) or \( \mu \leq \Ha^s_\infty \) and to illustrate through examples some differences between both conditions.
Given a Borel measure $\mu$ on $\R^N$ and a Borel set \(E \subset \R^{N}\), we first recall that the restriction of $\mu$ to $E$ is the Borel measure $\mu\restr{E}$ defined for $A \subset \R^N$ by
\[
	\mu\restr{E}(A) = \mu(E \cap A).
\]

\begin{exemple}
\label{sStraightSetsAndMeasures}
If \( E \subset \R^N \) is a Borel set, then the Borel measure \( \mu = \Ha^s\restr{E} \) satisfies \( \mu \leq \Ha^s \).
On the other hand, we have \( \mu \leq \Ha^s_\infty \) if and only if the Borel set \( E \) is \(s\)-straight.
This relies on the fact that every subset of an \(s\)-straight set is \(s\)-straight; see \cite{foran}*{Theorem~1} :
\end{exemple}

\begin{proposition}
\label{sstraightsubsets}
If $E \subset \R^N$ is an $s$-straight Borel set, then every Borel set $A \subset E$ is also $s$-straight.
\end{proposition}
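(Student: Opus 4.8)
The plan is to use the always-valid inequality $\Ha^s_\infty(A) \le \Ha^s(A)$ together with the finiteness of $\Ha^s(E)$, so that the only thing requiring proof is the reverse bound $\Ha^s(A) \le \Ha^s_\infty(A)$. Since $A \subset E$ and $\Ha^s$ is monotone, we immediately get $\Ha^s(A) \le \Ha^s(E) < +\infty$, which takes care of the finiteness requirement in the definition of $s$-straightness; it remains to establish the equality $\Ha^s_\infty(A) = \Ha^s(A)$.

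The central idea is to play the \emph{additivity} of the Hausdorff measure $\Ha^s$ against the mere \emph{subadditivity} of the Hausdorff content $\Ha^s_\infty$, exploiting the complementary set $E \setminus A$. Since $A$ and $E \setminus A$ are disjoint Borel sets and $\Ha^s$ is a Borel measure, one has
\[
	\Ha^s(A) + \Ha^s(E \setminus A) = \Ha^s(E).
\]
On the other hand, $\Ha^s_\infty$ is subadditive, and combining this with the straightness of $E$ and the two pointwise inequalities $\Ha^s_\infty(A) \le \Ha^s(A)$ and $\Ha^s_\infty(E \setminus A) \le \Ha^s(E \setminus A)$ yields the chain
\[
	\Ha^s(A) + \Ha^s(E \setminus A) = \Ha^s(E) = \Ha^s_\infty(E) \le \Ha^s_\infty(A) + \Ha^s_\infty(E \setminus A) \le \Ha^s(A) + \Ha^s(E \setminus A).
\]

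Because every quantity appearing here is finite (all sets being subsets of $E$), the two extreme terms of this chain coincide, forcing every inequality to be an equality. In particular $\Ha^s_\infty(A) + \Ha^s_\infty(E \setminus A) = \Ha^s(A) + \Ha^s(E \setminus A)$, and since each summand on the left is bounded above by the corresponding summand on the right, a termwise cancellation gives $\Ha^s_\infty(A) = \Ha^s(A)$, so $A$ is $s$-straight. The step I expect to be the conceptual heart of the argument is the decision to introduce $E \setminus A$ and to contrast the additivity of $\Ha^s$ with the subadditivity of $\Ha^s_\infty$; once that squeeze is set up, the finiteness of $\Ha^s(E)$ makes the termwise cancellation automatic, and no delicate estimate on coverings is needed.
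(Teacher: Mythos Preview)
Your proof is correct and follows essentially the same approach as the paper: both arguments introduce the complementary set $E \setminus A$, contrast the additivity of $\Ha^s$ with the subadditivity of $\Ha^s_\infty$, and use the $s$-straightness of $E$ to force all inequalities in the resulting chain to be equalities. The only cosmetic difference is the direction in which the chain is written.
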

\begin{proof}
Since $\Ha^s$ is Borel regular and \( \Ha^s_\infty \) is subadditive, we have
\[
	\Ha^s(E) = \Ha^s(A) + \Ha^s(E \setminus A) \geq \Ha^s_\infty(A) + \Ha^s_\infty(E \setminus A) \geq \Ha^s_\infty(E).
\]
Using that \( E \) is \(s\)-straight, all the inequalities above are actually equalities, which implies that $\Ha^s(A) = \Ha^s_\infty(A)$.
\end{proof}

\begin{exemple}
If \( s < N \) and if $f : \R^N \to \R$ is a nonnegative Borel measurable function, then the absolutely continuous measure $\lambda_f$ defined by
\[
	\lambda_f(E) = \int_E f~\d x \quad \text{for every Borel set \( E \subset \R^N \)}
\]
satisfies \( \lambda_f \leq \Ha^s \).
Indeed, for every Borel set \( E \subset \R^N \) with finite \( \Ha^s \) measure, we have \(\Ha^N(E) = 0 \) and thus \( \lambda_f(E) = 0 \). 
Now, using Proposition~\ref{characsstraightmes} below, one shows that \( \lambda_f \leq \Ha^s_\infty \) if and only if 
\[
	\int_{B_r(x)} f~\d x \leq \omega_s r^s \quad \text{for every ball \(B_r(x) \subset \R^N\).}
\]
This condition is fulfilled for instance if 
\[
	f(x) \leq \frac{s\omega_s}{N\omega_N} \frac{1}{\norm{x}^{N-s}} \quad \text{for almost every \( x \in \R^N \setminus \{0\} \).}
\]
\end{exemple}

In the previous example, we used that, as for $s$-straight sets~\cite{delaware}*{Theorem~1}, Borel measures \( \mu \leq \Ha^s_\infty \) are characterized in terms of a density estimate:

\begin{proposition}
\label{characsstraightmes}
If\/ \( 0 < \delta \leq +\infty \) and if $\mu$ is a Borel measure on $\R^N$, then \( \mu \leq \Ha^s_\delta\) if and only if 
\[
	\mu(B_r(x)) \leq \omega_s r^s \quad \text{for every ball \( B_r(x) \subset \R^N \) with \(0 \leq r \leq \delta\).}
\]
\end{proposition}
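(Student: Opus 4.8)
The plan is to prove the two implications separately, both of which follow directly from the definition of \(\Ha^s_\delta\) together with the basic monotonicity and countable subadditivity of the measure \(\mu\). I expect no genuine obstacle here: the content is essentially the observation that the Hausdorff capacity is built precisely out of coverings by balls of radius at most \(\delta\), so the density estimate on balls and the inequality \(\mu \le \Ha^s_\delta\) are two facets of the same statement.

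For the necessity of the density bound, I would start from a single ball \(B_r(x)\) with \(0 \le r \le \delta\) and note that it covers itself, so that \(\{B_r(x)\}\) is an admissible covering in the definition of \(\Ha^s_\delta\); this immediately yields \(\Ha^s_\delta(B_r(x)) \le \omega_s r^s\). Combining this with the hypothesis \(\mu \le \Ha^s_\delta\) applied to the Borel set \(B_r(x)\) gives \(\mu(B_r(x)) \le \Ha^s_\delta(B_r(x)) \le \omega_s r^s\), as required. This direction treats the cases \(\delta < +\infty\) and \(\delta = +\infty\) uniformly, the only degenerate case \(r = 0\) being trivial since \(B_0(x) = \emptyset\).

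For the converse, I would fix a Borel set \(E \subset \R^N\) and an arbitrary admissible covering \(E \subset \bigcup_{n \in \N} B_{r_n}(x_n)\) with \(0 \le r_n \le \delta\). Since each open ball is a Borel set, monotonicity of \(\mu\) together with its countable subadditivity gives
\[
	\mu(E) \le \mu\Bigl(\bigcup_{n \in \N} B_{r_n}(x_n)\Bigr) \le \sum_{n \in \N} \mu(B_{r_n}(x_n)) \le \sum_{n \in \N} \omega_s r_n^s,
\]
where the last inequality uses the assumed density estimate for each ball. Taking the infimum over all such coverings then yields \(\mu(E) \le \Ha^s_\delta(E)\), and since \(E\) was an arbitrary Borel set this is exactly \(\mu \le \Ha^s_\delta\).

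The only points deserving a line of care are measurability and the passage to the infimum: the open balls appearing in the coverings are Borel, so all the quantities \(\mu(B_{r_n}(x_n))\) and \(\mu\bigl(\bigcup_{n} B_{r_n}(x_n)\bigr)\) are well defined, and taking the infimum is legitimate because the displayed bound holds for every admissible covering simultaneously. Notably, no compactness or regularity property of \(\mu\) is invoked, which is precisely why the statement remains valid for an arbitrary Borel measure and for every \(\delta \in (0, +\infty]\).
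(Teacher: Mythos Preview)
Your proposal is correct and follows essentially the same argument as the paper: both directions are obtained exactly as you describe, by covering a ball with itself for the forward implication and by combining countable subadditivity with the density estimate and then passing to the infimum for the converse.
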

\begin{proof}
If \( \mu \leq \Ha^s_\delta \), then we may directly write
\[
	\mu(B_r(x)) \leq \Ha^s_\delta(B_r(x)) \leq \omega_s r^s 
\]
for every ball \( B_r(x) \subset \R^N \) with \(0 \leq r \leq \delta\) by covering $B_r(x)$ with itself in the definition of the Hausdorff capacity.
For the converse, let $E \subset \R^N$ be a Borel set and let $E \subset \bigcup\limits_{n \in \N} B_{r_n}(x_n)$ be an arbitrary covering of $E$ by balls with \( 0 \leq r_n \leq \delta \) for each \(n \in \N\).
The estimate for $\mu$ implies that
\[
	\mu(E) \leq \sum_{n \in \N}\mu(B_{r_n}(x_n)) \leq \sum_{n \in \N} \omega_s r_n^s.
\]
The conclusion follows by taking the infimum over such coverings. 
\end{proof}

This proposition sheds some light on Theorem~\ref{thmdecomposition}. 
Indeed, it allows one, through a decomposition, to convert the abstract inequality $\mu \leq \Ha^s$ into a more explicit estimate satisfied by countably many parts of \( \mu \). 

We conclude this section by an example showing that the set \(F\) in Theorem~\ref{thmdecomposition2} cannot be avoided for a general Borel measure:

\begin{exemple}
Given $s < N$, define a Borel measure $\mu$ on $\R^N$ by letting
\[
	\mu(E) = 
	\begin{cases}
		0 & \text{if } E \text{ is } \sigma\text{-finite under } \Ha^s, \\
		+\infty & \text{otherwise.}
	\end{cases}
\]
This defines a Borel measure on $\R^N$ that only takes the values $0$ and $+\infty$.
Furthermore, $\mu$ does not identically vanish due to our choice of $s$, and we also have $\mu \leq \Ha^s$.

However, the estimate \( \mu\restr{E} \leq \Ha^s_{\infty} \) fails for every Borel set \( E \subset \R^N \) with \( \mu(E) > 0 \).
Indeed, we must have \( \mu(E) = +\infty \) and then, using the Increasing Set Lemma, there exists a cube \( Q \subset \R^N \) such that 
\[
	\mu\restr{E}(Q) = \mu(E \cap Q) = +\infty > \Ha^s_{\infty}(Q).
\]
Hence, for this measure, Theorem~\ref{thmdecomposition} holds with \(I = \varnothing\) and \(F = \R^N\).
\end{exemple}

\section{Existence of an \(s\)-straight part of a measure}
\label{section3}

For convenience, we shall say that a finite Borel measure \(\mu\) is \emph{\(s\)-straight} whenever \( \mu \leq \Ha^s_\infty \).
As we have seen in Example~\ref{sStraightSetsAndMeasures}, this is a natural generalisation of \(s\)-straight sets.

In this section, we show that any nonzero finite Borel measure \(\mu\) contains an \(s\)-straight part, that lives on a Borel set of positive measure under \(\mu\).
This is the counterpart of~\cite{delaware}*{Theorem~4} and will serve as a key tool to prove our decomposition theorems.
The main result of this section is stated as follows:

\begin{proposition}
\label{thm4}
If $\mu$ is a nonzero finite Borel measure on $\R^N$ satisfying $\mu \leq \Ha^s$, then there exists a Borel set $E \subset \R^N$ such that $\mu(E) > 0$ and $\mu\restr{E}$ is $s$-straight.
\end{proposition}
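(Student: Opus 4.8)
The plan is to translate the conclusion into the ball condition supplied by Proposition~\ref{characsstraightmes} and then to carve out, from the region where \(\mu\) is suitably concentrated, a positive-measure piece on which that condition holds. Applying Proposition~\ref{characsstraightmes} to the restricted measure \(\mu\restr{E}\), the assertion that \(\mu\restr{E}\) is \(s\)-straight is equivalent to
\[
\mu(E \cap B_r(x)) \le \omega_s r^s \quad \text{for every ball } B_r(x) \subset \R^N .
\]
So I would look for a Borel set \(E\) with \(\mu(E) > 0\) satisfying this uniform bound. A first useful reduction: since \(\mu(E \cap B_r(x)) \le \mu(E)\), the inequality is automatic as soon as \(\omega_s r^s \ge \mu(E)\); hence only the radii \(0 < r \le (\mu(E)/\omega_s)^{1/s}\) are constraining, and the genuine difficulty splits into a small-scale (local density) part and finitely many relevant large scales.

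The engine driving the argument is the hypothesis \(\mu \le \Ha^s\), which forbids \(\mu\) from being too concentrated. For \(\lambda > 1\), I would consider the set \(D_\lambda\) of points admitting arbitrarily small balls with \(\mu(B_r(x)) > \lambda\,\omega_s r^s\). A covering argument of Vitali type, combined with \(\mu \le \Ha^s\), yields a bound of the form \(\mu(D_\lambda) \le C\lambda^{-1}\mu(\R^N)\); sharpening this through the density comparison theorem even gives \(\limsup_{r \to 0^+} \mu(B_r(x))/(\omega_s r^s) \le 1\) for \(\mu\)-almost every \(x\). This is what controls the small-scale behaviour of any piece of \(\mu\) that I extract.

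The main obstacle is the construction itself, namely producing a surviving set of positive mass. The naive candidate \(E = \R^N \setminus U\), with \(U = \bigcup\{B_r(x) : \mu(B_r(x)) > \omega_s r^s\}\), does satisfy the ball condition---each test ball \(B_r(x)\) is either good, so that \(\mu(E \cap B_r(x)) \le \mu(B_r(x)) \le \omega_s r^s\), or over-dense, hence one of the balls forming \(U\), so that \(E \cap B_r(x) = \varnothing\)---but this \(E\) can be \(\mu\)-negligible: for \(\mu = \Ha^1\restr{\Sp^1}\) a single ball centred near the centre of the circle is already over-dense and swallows the whole circle, so \(\mu(E) = 0\). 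The heart of the proof, mirroring Delaware's extraction in~\cite{delaware}*{Theorem~4}, is therefore to remove over-dense balls selectively rather than all at once: interleaving the small scales governed by the density bound with the finitely many constraining large scales in a stopping-time fashion, so that what remains is a positive-measure \emph{straight dust}. Guaranteeing \(\mu(E) > 0\) is precisely the step where \(\mu \le \Ha^s\) must be used decisively, through a packing estimate bounding the mass of the discarded balls.
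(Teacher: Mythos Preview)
Your proposal correctly identifies the reduction via Proposition~\ref{characsstraightmes} to a ball-density estimate, and your density observation---that \(\limsup_{r \to 0^+} \mu(B_r(x))/(\omega_s r^s) \le 1\) for \(\mu\)-almost every \(x\)---is essentially the content of Proposition~\ref{prop1415}, which the paper uses as its main input. You also correctly diagnose why the naive excision \(E = \R^N \setminus U\) fails and that the genuine difficulty lies in ensuring \(\mu(E) > 0\).

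However, the proposal stops precisely at the hard step: you do not actually construct \(E\). The phrases ``stopping-time fashion'' and ``packing estimate bounding the mass of the discarded balls'' suggest a mechanism quite different from what is needed, and it is not clear such an approach can be completed. A stopping-time or Vitali-type excision of over-dense balls gives no obvious control on the removed mass with constant close enough to \(1\): the \(5r\)-covering already loses a factor \(5^s\), and your own circle example shows that a single over-dense ball at one scale can wipe out all the mass. The paper's construction is not of this type. After using Proposition~\ref{prop1415} to pass to a set \(A\) on which \(\mu\restr{A} \le (1+\varepsilon_j)\Ha^s_{\delta_j}\) for a summable sequence \((\varepsilon_j)\), it partitions \(A\) at each scale \(j\) into pieces \(A_{i,j}\) of small diameter and invokes the intermediate value property for atomless measures (Proposition~\ref{intermediatevaluemes}) to extract from each piece a subset \(E_{i,j}\) with \(\mu(E_{i,j}) = (1-\varepsilon_{j-1})\mu(A_{i,j})\); then \(E = \bigcap_j \bigcup_i E_{i,j}\). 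The mass lost at level \(j\) is exactly \(\varepsilon_{j-1}\mu(A)\), so \(\sum_j \varepsilon_j < 1\) guarantees \(\mu(E) > 0\); and the factor \((1-\varepsilon_{j-1})\) on each piece is what cancels the \((1+\varepsilon_{j-1})\) from Proposition~\ref{prop1415} when estimating \(\mu\restr{E}(B_r(x))\). This uniform shaving-off mechanism---not selective ball excision---is the missing idea, and your sketch does not contain it.
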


Before proving Proposition~\ref{thm4}, we will need some preparatory tools.
We begin with the next result, which is an analogue of the Intermediate Value Theorem for Borel measures that do not charge singletons.

\begin{proposition}
\label{intermediatevaluemes}
Let $\mu$ be a finite Borel measure on $\R^N$ such that, for every $x \in \R^N$, we have $\mu(\{x\}) = 0$.
If $E \subset \R^N$ is a Borel set satisfying $\mu(E) > 0$, then, for every $0 < c < \mu(E)$, there exists a Borel set $A \subset E$ such that $\mu(A) = c$.
\end{proposition}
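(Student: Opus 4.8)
The plan is to reduce the statement to the classical fact that a \emph{non-atomic} finite measure attains every value between $0$ and its total mass, and then to run a greedy exhaustion argument to produce the set $A$. The pointwise hypothesis $\mu(\{x\}) = 0$ is a priori weaker than genuine non-atomicity, so the first—and main—task is to upgrade it; this is where the Euclidean structure of $\R^N$ enters and is the step I expect to be the real obstacle.

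\emph{Step 1 (splitting lemma).} I would first show that every Borel set $B \subset \R^N$ with $\mu(B) > 0$ admits a Borel subset $B'$ with $0 < \mu(B') < \mu(B)$. Suppose not: then for every half-open dyadic cube $Q$ one has $\mu(B \cap Q) \in \{0, \mu(B)\}$. At each dyadic generation, $\R^N$ is partitioned into countably many such cubes whose $B$-masses sum to $\mu(B)$, so exactly one cube per generation carries the full mass. Choosing these cubes produces a nested sequence $Q_1 \supset Q_2 \supset \cdots$ with $\diam Q_n \to 0$ and $\mu(B \cap Q_n) = \mu(B)$, whose intersection is a single point $\{x\}$. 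Continuity from above of the finite measure $\mu$ then gives $\mu(\{x\}) = \lim_n \mu(B \cap Q_n) = \mu(B) > 0$, contradicting the hypothesis. The finiteness of $\mu$ and the fact that shrinking nested cubes collapse to a point are used in an essential way here.

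\emph{Step 2 (arbitrarily small pieces).} Iterating the splitting lemma, I would deduce that every $B$ with $\mu(B) > 0$ contains, for each $\varepsilon > 0$, a Borel subset of measure in $(0, \varepsilon)$: split $B$, keep whichever of the two pieces has measure at most $\mu(B)/2$, and repeat to reach mass at most $\mu(B)/2^n < \varepsilon$.

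\emph{Step 3 (exhaustion).} Fix $0 < c < \mu(E)$ and build inductively an increasing sequence of Borel sets $A_0 = \varnothing \subset A_1 \subset \cdots \subset E$ with $\mu(A_n) \le c$, where at each stage I set $s_n = \sup\{\mu(C) : C \subset E \setminus A_n \text{ Borel},\ \mu(A_n) + \mu(C) \le c\}$, choose $C_n \subset E \setminus A_n$ with $\mu(A_n) + \mu(C_n) \le c$ and $\mu(C_n) \ge s_n/2$, and put $A_{n+1} = A_n \cup C_n$. Let $A = \bigcup_n A_n$, so that $\mu(A) = \lim_n \mu(A_n) \le c$ by continuity from below. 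If $\mu(A) < c$, then $\mu(E \setminus A) > 0$, and Step 2 provides $D \subset E \setminus A$ with $0 < \mu(D) < c - \mu(A)$; since $D$ competes in the supremum defining each $s_n$, we get $s_n \ge \mu(D)$, hence $\mu(C_n) \ge \mu(D)/2 > 0$ for all $n$. But the $C_n$ are pairwise disjoint with $\sum_n \mu(C_n) = \mu(A) < +\infty$, forcing $\mu(C_n) \to 0$, a contradiction. Therefore $\mu(A) = c$, and $A \subset E$ is the desired set.
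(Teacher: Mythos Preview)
Your proof is correct and follows essentially the same route as the paper: a shrinking-cubes argument exploiting $\mu(\{x\})=0$ to produce arbitrarily small pieces (the paper does this in one step as its Lemma~\ref{lemmaintermediatevaluemes}, while you split it into a ``splitting lemma'' plus an iteration), followed by the identical greedy exhaustion. One tiny imprecision: the intersection of your nested half-open dyadic cubes is \emph{at most} a singleton (it may be empty), but either way its $\mu$-measure is zero and your contradiction goes through unchanged.
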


This property is well-known, but we present a proof for the comfort of the reader.
It relies on the following lemma.

\begin{lemme}
\label{lemmaintermediatevaluemes}
Let $\mu$ be a finite Borel measure on $\R^N$ such that, for every $x \in \R^N$, we have $\mu(\{x\}) = 0$.
If $E \subset \R^N$ is a Borel set satisfying $\mu(E) > 0$, then, for each $\varepsilon > 0$, there exists a Borel set $A \subset E$ such that $0 < \mu(A) \leq \varepsilon$.
\end{lemme}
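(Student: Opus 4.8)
The plan is to produce the set $A$ by subdividing $E$ into smaller and smaller pieces of positive measure and then to invoke continuity of $\mu$ from above together with the hypothesis that $\mu$ charges no point. Concretely, I would work with the half-open dyadic cubes $Q = \prod_{i=1}^{N}[k_{i}2^{-n}, (k_{i}+1)2^{-n})$ with $k_{i} \in \Z$, which for each fixed $n \in \N$ form a countable partition of $\R^{N}$ and each of which splits into its $2^{N}$ dyadic children at the next scale. The reason for using half-open cubes is merely to obtain a genuine partition so that additivity of $\mu$ can be applied without worrying about overlapping boundaries.

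First I would locate a starting cube. Since the level-$0$ cubes partition $\R^{N}$, countable additivity gives $\mu(E) = \sum_{Q}\mu(E \cap Q) > 0$, so at least one cube $Q_{0}$ satisfies $\mu(E \cap Q_{0}) > 0$. Then I would construct a nested sequence $Q_{0} \supset Q_{1} \supset Q_{2} \supset \cdots$ recursively: given a cube $Q_{n}$ with $\mu(E \cap Q_{n}) > 0$, the cube $Q_{n}$ is the disjoint union of its finitely many children, so finite additivity forces at least one child $Q_{n+1}$ to satisfy $\mu(E \cap Q_{n+1}) > 0$. This yields cubes with $\diam Q_{n} = \sqrt{N}\,2^{-n} \to 0$ and $\mu(E \cap Q_{n}) > 0$ for every $n \in \N$.

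The key step is to control the limit of the measures $\mu(E \cap Q_{n})$. The closures $\overline{Q_{n}}$ are nonempty, compact, nested, and have diameters tending to $0$, so by Cantor's intersection theorem $\bigcap_{n \in \N}\overline{Q_{n}}$ reduces to a single point $\{x\}$; hence $\bigcap_{n \in \N}(E \cap Q_{n}) \subseteq \{x\}$. This is exactly where the hypothesis $\mu(\{x\}) = 0$ enters, giving $\mu\bigl(\bigcap_{n \in \N}(E \cap Q_{n})\bigr) = 0$. Since $\mu$ is finite, continuity from above applies to the decreasing sequence $(E \cap Q_{n})_{n \in \N}$ and yields $\lim_{n \to \infty}\mu(E \cap Q_{n}) = 0$. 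Consequently there exists $n \in \N$ with $0 < \mu(E \cap Q_{n}) \le \varepsilon$, and $A = E \cap Q_{n}$ is the desired set.

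I expect the only real subtleties to be bookkeeping: the passage to half-open cubes to ensure a true partition, and the appeal to continuity from above, which is precisely where finiteness of $\mu$ is used. The conceptual heart of the argument is that non-atomicity prevents the positive mass from surviving in the limit as the cubes collapse to a point, which is what forces the measures of the nested pieces down to $0$; an atom at $x$ would break exactly this step, keeping $\mu(E \cap Q_{n})$ bounded away from $0$.
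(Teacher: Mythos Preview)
Your proof is correct and follows essentially the same approach as the paper: a nested sequence of cubes with halved side lengths is constructed so that each $E \cap Q_n$ has positive measure, and then continuity from above together with $\mu(\{x\}) = 0$ forces $\mu(E \cap Q_n) \to 0$. Your use of half-open dyadic cubes and the explicit appeal to Cantor's intersection theorem are cosmetic refinements of the same argument.
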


\begin{proof}[Proof of Lemma~\ref{lemmaintermediatevaluemes}]
Using the Increasing Set Lemma, one finds a cube $Q_0 \subset \R^N$ such that $\mu(E \cap Q_0) > 0$.
Then, by induction, one constructs a decreasing sequence of cubes $(Q_n)_{n \in \N}$ such that $\mu(E \cap Q_n) > 0$ for each $n \in \N$ and where the size of the side lengths of the cubes are halved at each step.

Since the diameters of the cubes tend to $0$ as $n \to \infty$, there exists $x \in \R^N$ such that $\bigcap\limits_{n \in \N} Q_n \subset \{x\}$.
As $\mu$ is finite, one may apply the Decreasing Set Lemma to obtain
\[
	\lim_{n \to \infty} \mu(E \cap Q_n) = \mu\bigg(E \cap \bigg(\bigcap_{n \in \N} Q_n\bigg)\bigg) \leq \mu(\{x\}) = 0.
\]
It thus suffices to take $A = E \cap Q_m$ for $m \in \N$ large enough.
\end{proof}

The proof of Proposition~\ref{intermediatevaluemes} is based on a standard exhaustion argument.

\begin{proof}[Proof of Proposition~\ref{intermediatevaluemes}]
Let $0 < c < \mu(E)$ and $A_0 = \varnothing$.
By induction, we may construct a sequence $(A_n)_{n \in \N}$ of disjoint Borel subsets of $E$ such that 
\[
	\frac{c_n}{2} \leq \mu(A_{n}) \leq c_n \leq c \quad \text{for each $n \in \N_\ast$},
\]
where
\[
	c_n = \sup\imset{\mu(A)}{A \subset E \setminus \bigcup_{k = 0}^{n-1} A_k,~\mu(A) \leq c - \mu\bigg(\bigcup_{k = 0}^{n-1} A_k\bigg)}.
\]
Let $A = \bigcup\limits_{n \in \N} A_n \subset E$, so that $\mu(A) = \lim\limits_{n \to \infty} \mu\Big(\bigcup\limits_{k = 0}^{n} A_k\Big) \leq c$.
Assume by contradiction that this inequality is strict.
Then, since 
\[
	\mu(E \setminus A) = \mu(E) - \mu(A) > 0 \quad \text{ and } \quad c - \mu(A) > 0,
\]
we may invoke the previous lemma to obtain a Borel set $B \subset E \setminus A$ such that $0 < \mu(B) \leq c - \mu(A)$.
But this implies that $B$ is admissible in the definition of the numbers $c_n$, whence we deduce that $\mu(B) \leq c_n$ for each $n \in \N_\ast$.
On the other hand, we write
\[
	\frac{1}{2}\sum_{n \in \N_\ast} c_n \leq \sum_{n \in \N_\ast} \mu(A_n) = \mu(A) < +\infty,
\]
and thus $c_n \to 0$ as $n \to \infty$.
This contradicts the fact that $0 < \mu(B) \leq c_n$ for each $n \in \N_\ast$, and the conclusion follows.
\end{proof}

Another tool is the following result whose proof can be found in~\cite{elliptic_pdes}*{Proposition~14.15}.

\begin{proposition}
\label{prop1415}
If $\mu$ is a finite Borel measure on $\R^N$ satisfying $\mu \leq \Ha^s$, then, for every $\varepsilon > 0$, there exists a Borel set $A \subset \R^N$ such that
\begin{enumerate}
	\item for every $\beta > 1$, there exists $\delta > 0$ such that $\mu\restr{A} \leq \beta\Ha^s_\delta$\ ,
	\item $\mu(\R^N \setminus A) \leq \varepsilon$.
\end{enumerate}
\end{proposition}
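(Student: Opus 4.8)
The plan is to build \(A\) by \emph{discarding every ball on which \(\mu\) is too concentrated}; this makes the first property nearly automatic, and the whole difficulty is then concentrated in showing that little mass has been discarded.

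\emph{The mechanism.} Fix \(\beta > 1\) and \(\delta > 0\), and set
\[
Z_{\beta, \delta} = \bigcup \bigl\{ B_{r}(x) : 0 < r \le \delta \text{ and } \mu(B_{r}(x)) > \beta \omega_{s} r^{s} \bigr\}, \qquad A_{\beta,\delta} = \R^{N} \setminus Z_{\beta,\delta}.
\]
As a union of open balls, \(Z_{\beta,\delta}\) is open, so \(A_{\beta,\delta}\) is Borel. I claim that \(\mu\restr{A_{\beta,\delta}} \le \beta \Ha^{s}_{\delta}\). By Proposition~\ref{characsstraightmes} applied to \(\mu\restr{A_{\beta,\delta}}\), it is enough to check that \(\mu(A_{\beta,\delta} \cap B_{r}(x)) \le \beta \omega_{s} r^{s}\) for every ball with \(r \le \delta\). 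If \(\mu(B_{r}(x)) > \beta \omega_{s} r^{s}\), then \(B_{r}(x) \subseteq Z_{\beta,\delta}\) by construction, so \(A_{\beta,\delta} \cap B_{r}(x) = \varnothing\); otherwise \(\mu(A_{\beta,\delta} \cap B_{r}(x)) \le \mu(B_{r}(x)) \le \beta \omega_{s} r^{s}\). Either way the bound holds. The point is that discarding the heavy balls removes exactly the obstruction coming from the fact that the balls testing \(\Ha^{s}_{\delta}\) need not be centered in \(A_{\beta,\delta}\).

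\emph{Controlling the discarded mass (the main obstacle).} It remains to prove that \(\mu(Z_{\beta,\delta}) \to 0\) as \(\delta \to 0\), for each fixed \(\beta > 1\). Since \(\mu\) is finite and \(Z_{\beta,\delta}\) decreases as \(\delta \to 0\), it suffices to show that \(Z_{\beta} = \bigcap_{\delta > 0} Z_{\beta,\delta}\) is \(\mu\)-negligible. Now \(y \in Z_{\beta}\) exactly when \(y\) belongs to arbitrarily small balls \(B_{r}(x)\) with \(\mu(B_{r}(x)) > \beta \omega_{s} r^{s}\), so \(Z_{\beta} \subseteq \{ y : \overline{\Theta}(y) > 1 \}\), where
\[
\overline{\Theta}(y) = \limsup_{\delta \to 0}\, \sup \Bigl\{ \tfrac{\mu(B_{r}(x))}{\omega_{s} r^{s}} : y \in B_{r}(x),\ r \le \delta \Bigr\}
\]
is the \emph{uncentered} upper \(s\)-density of \(\mu\) at \(y\). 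I would therefore prove that \(\overline{\Theta} \le 1\) \(\mu\)-almost everywhere. Fix \(t > 1\) and let \(G = \{ \overline{\Theta} > t \}\). For \(y \in G\) the balls with \(\mu(B_{r}(x)) > t \omega_{s} r^{s}\) through \(y\) form a fine cover of \(G\); applying the Vitali covering theorem for \(\Ha^{s}\) inside an open set \(U \supseteq G\) with \(\mu(U)\) close to \(\mu(G)\), one extracts a disjoint subfamily \(\{ B_{i} \}\) lying in \(U\) with \(\Ha^{s}(G \setminus \bigcup_{i} B_{i}) = 0\), whence
\[
\Ha^{s}(G) \le \sum_{i} \omega_{s} r_{i}^{s} < \frac{1}{t} \sum_{i} \mu(B_{i}) \le \frac{1}{t}\, \mu(U).
\]
Letting \(U \downarrow G\) gives \(\mu(G) \ge t \Ha^{s}(G)\); combined with \(\mu \le \Ha^{s}\) this yields \(\mu(G) \ge t \mu(G)\), so \(\mu(G) = 0\). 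Letting \(t \downarrow 1\) gives \(\overline{\Theta} \le 1\) \(\mu\)-a.e., hence \(\mu(Z_{\beta}) = 0\). The delicate point — and the reason this is the heart of the argument — is that one must run a genuine Vitali covering in order to keep the constant equal to \(1\): the crude \(5r\)-covering lemma would only yield \(\overline{\Theta} \le 5^{s}\) and would prove the estimate merely for \(\beta > 5^{s}\), which is useless since \(\beta\) must be allowed arbitrarily close to \(1\).

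\emph{Assembling a single set.} Finally, to obtain one set \(A\) serving all \(\beta\), I would fix a sequence \(\beta_{j} \downarrow 1\) and, using the previous step, radii \(\delta_{j} > 0\) with \(\mu(Z_{\beta_{j}, \delta_{j}}) \le \varepsilon / 2^{j}\), and set \(A = \bigcap_{j \ge 1} A_{\beta_{j}, \delta_{j}}\). Then \(\mu(\R^{N} \setminus A) \le \sum_{j \ge 1} \mu(Z_{\beta_{j}, \delta_{j}}) \le \varepsilon\), which gives~(2). For~(1), given \(\beta > 1\) choose \(j\) with \(\beta_{j} \le \beta\); since \(A \subseteq A_{\beta_{j}, \delta_{j}}\) we get \(\mu\restr{A} \le \mu\restr{A_{\beta_{j}, \delta_{j}}} \le \beta_{j} \Ha^{s}_{\delta_{j}} \le \beta \Ha^{s}_{\delta_{j}}\), which is~(1) with \(\delta = \delta_{j}\).
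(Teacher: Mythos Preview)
The paper does not prove this proposition; it only cites \cite{elliptic_pdes}*{Proposition~14.15}. So there is no in-paper argument to compare against, and one can only assess your proof on its own.

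Your ``mechanism'' and ``assembling'' steps are clean and correct; the difficulty is exactly where you locate it, but the resolution you offer is incomplete. You invoke ``the Vitali covering theorem for \(\Ha^{s}\)'' to extract, from your fine cover of \(G\), a countable disjoint subfamily with \(\Ha^{s}(G\setminus\bigcup_{i}B_{i})=0\). Two issues arise. First, \(\Ha^{s}\) is not locally finite when \(s<N\), so before any Vitali-type theorem can be applied one must know \(\Ha^{s}(G)<\infty\); this is easy to arrange via the \(5r\)-lemma (giving \(\Ha^{s}(G)\le \tfrac{5^{s}}{t}\mu(\R^{N})<\infty\)), but you do not say it. Second, and more seriously, the covering theorem you need is for \emph{uncentered} balls: your fine cover consists of balls \(B_{r}(x)\ni y\) with no control on \(|x-y|/r\), so \(y\) may lie arbitrarily close to \(\partial B_{r}(x)\). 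The Besicovitch--Vitali theorem for Radon measures (which is what applies to \(\Ha^{s}\lfloor_{G}\)) requires each point of \(G\) to be the \emph{center} of arbitrarily small balls of the family; with unbounded eccentricity it simply does not apply. You are right that the \(5r\)-lemma alone only yields \(\overline{\Theta}\le 5^{s}\), but you have not supplied the device that removes the constant.

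One repair: for each bad ball \(B_{r}(x)\ni y\) and fixed \(\eta>0\), the inflated ball \(B_{(1+\eta)r}(x)\) still satisfies \(\mu(B_{(1+\eta)r}(x))>\tfrac{t}{(1+\eta)^{s}}\,\omega_{s}((1+\eta)r)^{s}\), and now \(B_{\eta r}(y)\subset B_{(1+\eta)r}(x)\subset B_{(2+\eta)r}(y)\). Thus the inflated balls form a fine cover of \(G\) by sets of eccentricity bounded by \((2+\eta)/\eta\) relative to the points of \(G\), and the Morse--Besicovitch covering theorem for families of bounded eccentricity yields a genuine Vitali property for the Radon measure \(\Ha^{s}\lfloor_{G}\). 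One obtains \(\Ha^{s}(G)\le\tfrac{(1+\eta)^{s}}{t}\mu(U)\); letting \(\eta\to0\) and then \(U\downarrow G\) gives \(\mu(G)\le\Ha^{s}(G)\le\tfrac{1}{t}\mu(G)\), hence \(\mu(G)=0\) as you want.
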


We are now ready to prove Proposition~\ref{thm4}.
We rely on the strategy used by R. Delaware that had been suggested to him by D. Preiss.
In our case, Proposition~\ref{prop1415} above acts as a replacement for general measures of~\cite{delaware}*{Lemma~1} for \( \Ha^s \).

\begin{proof}[Proof of Proposition~\ref{thm4}]
We begin by dealing separately with the special case where there exists $x \in \R^N$ such that $\mu(\{x\}) > 0$.
This can only happen when $s = 0$, but then we have
\[
	0 < \mu(\{x\}) \leq \Ha^s(\{x\}) = 1 = \Ha^s_\infty(\{x\}).
\]
It therefore suffices to let $E = \{x\}$ to conclude.
We may thus assume from now on that $\mu(\{x\}) = 0$ for every $x \in \R^N$, which implies that $\mu$ satisfies the assumptions of Proposition~\ref{intermediatevaluemes}.

Let $(\varepsilon_j)_{j \in \N}$ be a decreasing sequence of positive real numbers that converges to \(0\). 
We apply Proposition~\ref{prop1415} with any $0 < \varepsilon < \mu(\R^N)$ to extract a Borel set $A \subset \R^N$ such that $\mu(A) > 0$ and a sequence $(\delta_j)_{j \in \N}$ such that $\mu\restr{A} \leq (1 + \varepsilon_j)\Ha^s_{\delta_j}$ for each $j \in \N$.
By Proposition~\ref{characsstraightmes}, this condition rewrites as
\begin{equation}
\label{estimatemurestrA}
	\mu\restr{A}(B_r(x)) \leq (1 + \varepsilon_j)\omega_s r^s
\end{equation}
for every ball $B_r(x) \subset \R^N$ with $r \leq \delta_j$.

We now turn to the construction of the required Borel set $E$.
Let us first pick a decreasing sequence of positive real numbers $(r_j)_{j \in \N}$ with $r_j \leq \delta_j$ for each $j \in \N$ that satisfies $\lim\limits_{j \to \infty} r_j = 0$ in addition to other properties to be described later on.
For each $j \in \N_\ast$, we consider a partition $A = \bigcup\limits_{i \in \N} A_{i,j}$ into disjoint Borel sets with diameter less than $r_{j+1}$.
Assume that the sequence $(\varepsilon_j)_{j \in \N}$ has been chosen such that $\varepsilon_j < 1$ for every $j \in \N_\ast$.
We may thus apply Proposition~\ref{intermediatevaluemes} to find, for each $i \in \N$ and $j \in \N_\ast$, a Borel set $E_{i,j} \subset A_{i,j}$ such that
\[
	\mu(E_{i,j}) = (1 - \varepsilon_{j-1})\mu(A_{i,j}).
\]
We then let
\[
	E = \bigcap_{j \in \N_\ast}\bigcup_{i \in \N} E_{i,j}.
\]

We show that the Borel set $E$ satisfies the conclusion of Proposition~\ref{thm4}, provided that the sequences $(\varepsilon_j)_{j \in \N}$ and $(r_j)_{j \in \N}$ are suitably chosen.
We begin with $\mu(E) > 0$.
Since for each $j \in \N_\ast$ the sets $A_{i,j}$ are disjoint, the same holds for the subsets $E_{i,j}$, whence
\[
	\mu\bigg(\bigcup_{i \in \N} E_{i,j}\bigg) = \sum_{i \in \N}\mu(E_{i,j}) = \sum_{i \in \N}(1 - \varepsilon_{j-1})\mu(A_{i,j}) = (1 - \varepsilon_{j-1})\mu(A)
\]
for each $j \in \N_\ast$.
Writing
\[
	A \setminus E = A \setminus \bigg(\bigcap_{j \in \N_\ast}\bigcup_{i \in \N} E_{i,j}\bigg) = \bigcup_{j \in \N_\ast}\bigg(A \setminus \bigcup_{i \in \N} E_{i,j}\bigg),
\]
we obtain
\[
\begin{split}
	\mu(A \setminus E) &\leq \sum_{j \in \N_\ast} \mu\bigg(A \setminus \bigcup_{i \in \N} E_{i,j}\bigg) \\ 
	&= \sum_{j \in \N_\ast}(\mu(A) - (1 - \varepsilon_{j-1})\mu(A)) = \sum_{j \in \N_\ast}\varepsilon_{j-1}\mu(A).
\end{split}
\]
So, if we choose the sequence $(\varepsilon_j)_{j \in \N}$ such that 
\[
	\sum_{j \in \N} \varepsilon_j < 1,
\]
then we obtain 
\[
	\mu(E) = \mu(A) - \mu(A \setminus E) > 0.
\]

It remains to prove that $\mu\restr{E}$ is $s$-straight.
According to Proposition~\ref{characsstraightmes}, it suffices to show that $\mu\restr{E}(B_r(x)) \leq \omega_s r^s$ for every ball $B_r(x) \subset \R^N$ with \( r > 0 \).
Assume that $r \geq r_0$.
Using Proposition~\ref{intermediatevaluemes}, we may suppose from the beginning that $0 < \mu(A) \leq \omega_s r_0^s$.
We therefore have
\[
	\mu\restr{E}(B_r(x)) \leq \mu(E) \leq \mu(A) \leq \omega_s r_0^s \leq \omega_s r^s.
\]

We are thus left with the case $0 < r < r_0$.
Since the sequence $(r_j)_{j \in \N}$ decreases to $0$, we may then find $j \in \N_\ast$ such that 
\[
	r_j \leq r < r_{j-1}.
\]
Denote by $I$ the set of indices $i \in \N$ such that $B_r(x) \cap E_{i,j} \neq \varnothing$.
We observe that
\begin{align*}
\mu\restr{E}(B_r(x)) &= \mu\bigg(B_r(x) \cap \bigcap_{k \in \N_\ast}\bigcup_{i \in \N} E_{i,k}\bigg) \\ 
&\leq \mu\bigg(\bigcup_{i \in \N} (B_r(x) \cap E_{i,j})\bigg) = \mu\bigg(\bigcup_{i \in I} (B_r(x) \cap E_{i,j})\bigg).
\end{align*}
By the subadditivity of $\mu$, we get
\[
	\mu\restr{E}(B_r(x)) \leq \sum_{i \in I}\mu(B_r(x) \cap E_{i,j}) \leq \sum_{i \in I} \mu(E_{i,j}) = (1 - \varepsilon_{j-1})\sum_{i \in I} \mu(A_{i,j}).
\]
Since the sets $A_{i,j}$ are disjoint, we find
\[	
	\mu\restr{E}(B_r(x)) \leq (1 - \varepsilon_{j-1})\mu\bigg(\bigcup_{i \in I} A_{i,j}\bigg).
\]
Each set $A_{i,j}$ having a diameter less than $r_{j+1}$, it follows from the definition of $I$ that $A_{i,j} \subset B_{r + r_{j+1}}(x) \cap A$ for every $i \in I$, and thus $\bigcup\limits_{i \in I} A_{i,j} \subset B_{r + r_{j+1}}(x) \cap A$.
We deduce that
\[
	\mu\restr{E}(B_r(x)) \leq (1 - \varepsilon_{j-1})\mu(B_{r + r_{j+1}}(x) \cap A) = (1 - \varepsilon_{j-1})\mu\restr{A}(B_{r + r_{j+1}}(x)).
\]
Assume now that
\[
	2r_k \leq \delta_k \quad \text{for each \( k \in \N \).}
\]
Since $r \leq r_{j-1}$, we have $r + r_{j+1} \leq 2r_{j-1} \leq \delta_{j-1}$.
We may thus apply estimate (\ref{estimatemurestrA}) with \( j - 1 \) to find
\[
	\mu\restr{E}(B_r(x)) \leq (1 - \varepsilon_{j-1})(1 + \varepsilon_{j-1})\omega_s(r + r_{j+1})^s = (1 - \varepsilon_{j-1}^2)\omega_s(r + r_{j+1})^s.
\]
To conclude, we invoke the uniform continuity of the function $t \mapsto t^s$ over $[r_k, r_{k-1}]$ to construct inductively \(r_{k+1} < r_k \) so that
\[
	\frac{(t + r_{k+1})^s}{t^s} \leq \frac{1}{1 - \varepsilon_{k - 1}^2} \quad \text{for every $r_k \leq t \leq r_{k-1}$.}
\]
This leads to \( \mu\restr{E}(B_r(x)) \leq \omega_s r^s \) as required.
Thus, \( \mu\restr{E} \) is \(s\)-straight.
\end{proof}

\section{Proofs of Theorems~\ref{thmdecomposition} and~\ref{thmdecomposition2}}
\label{section4}

This section is devoted to the proofs of our main results.
Theorem~\ref{thmdecomposition} is proved by an exhaustion argument, using Proposition~\ref{thm4} to recursively extract \(s\)-straight parts of the Borel measure \(\mu\). 

\begin{proof}[Proof of Theorem~\ref{thmdecomposition}]
Let $E_0 = \varnothing$.
By induction, we construct a sequence of disjoint Borel sets $(E_n)_{n \in \N}$ such that $\mu\restr{E_n}$ is $s$-straight for each $n \in \N$ and $\frac{1}{2}d_n \leq \mu(E_n) \leq d_n$ for each $n \in \N_\ast$, where
\[
	d_n = \sup\imset{\mu(A)}{A \subset \R^N \setminus \bigcup_{k = 0}^{n-1} E_k,~\mu\restr{A} \text{ is }s\text{-straight}}.
\]

We claim that $\mu\Big(\R^N \setminus \bigcup\limits_{n \in \N} E_n\Big) = 0$.
Assume by contradiction that this statement fails.
Using Proposition~\ref{thm4}, we may thus find a Borel set $A \subset \R^N \setminus \bigcup\limits_{n \in \N} E_n$ such that $\mu(A) > 0$ and $\mu\restr{A}$ is $s$-straight.
But then, $A$ is admissible in the definition of the numbers $d_n$, so that $\mu(A) \leq d_n$ for each $n \in \N_\ast$.
On the other hand, since
\[
	\frac{1}{2}\sum_{n \in \N_\ast} d_n \leq \sum_{n \in \N_\ast} \mu(E_n) = \mu\bigg(\bigcup_{n \in \N_\ast} E_n\bigg) < +\infty,
\]
we deduce that the sequence $(d_n)_{n \in \N_\ast}$ tends to $0$, which contradicts the fact that $0 < \mu(A) \leq d_n$ for each $n \in \N_\ast$.

Since $\mu\Big(\R^N \setminus \bigcup\limits_{n \in \N} E_n\Big) = 0$, the measure $\mu\restr{\R^N \setminus \bigcup\limits_{n \in \N} E_n}$ is identically zero and in particular \(s\)-straight.
We may therefore replace \( E_0 = \varnothing \) by $\R^N \setminus \bigcup\limits_{n \in \N} E_n$ in the sequence $(E_n)_{n \in \N}$, and this provides the required sequence of Borel sets.
\end{proof}

Theorem~\ref{thmdecomposition} can be readily extended to the \(\sigma\)-finite case.
Indeed, if \(\mu\) is \(\sigma\)-finite, one finds a sequence of disjoint Borel sets \( (A_n)_{n \in \N} \) of finite measure under \(\mu\).
One may then apply Theorem~\ref{thmdecomposition} to \(\mu\restr{A_n}\) for each \( n \in \N \) to obtain a sequence of disjoint Borel sets \( (E_{k,n})_{k \in \N} \) such that \(\R^N = \bigcup\limits_{k \in \N} E_{k,n} \) and \( \big(\mu\restr{A_n}\big)\restr{E_{k,n}} = \mu\restr{A_n \cap E_{k,n}} \) is \(s\)-straight for each \(k \in \N\).
It now suffices to reorder the countable family \( \{A_n \cap E_{k,n}\}_{(k,n) \in \N^2} \) into a sequence to get the conclusion.

We now show how Theorem~\ref{thmdelaware} can be obtained as a corollary of the previous theorem.

\begin{proof}[Proof of Theorem~\ref{thmdelaware}]
If $E \subset \R^N$ is a Borel set of finite $\Ha^s$ measure, then $\Ha^s\restr{E}$ is a finite Borel measure on $\R^N$ satisfying $\Ha^s\restr{E} \leq \Ha^s$.
Hence, Theorem~\ref{thmdecomposition} ensures the existence of a sequence of disjoint Borel sets $(E_n)_{n \in \N}$ such that $\R^N = \bigcup\limits_{n \in \N} E_n$ and $\big(\Ha^s\restr{E}\big)\restr{E_n} = \Ha^s\restr{E_n \cap E}$ is $s$-straight for each $n \in \N$.
But this implies that $E_n \cap E$ is $s$-straight for each $n \in \N$.
\end{proof}

We turn to the proof of our decomposition theorem for general Borel measures. 
The argument follows the same idea as for Theorem~\ref{thmdecomposition}, but one has to replace the by-hand inductive construction with Zorn's lemma.

\begin{proof}[Proof of Theorem~\ref{thmdecomposition2}]
Let $\mathcal{F}$ be the set of all families $(E_i)_{i \in I}$ of disjoint Borel sets of positive measure under $\mu$ and such that $\mu\restr{E_i}$ is $s$-straight for each $i \in I$.
Observe that $\mathcal{F}$ contains the empty family.
We introduce a partial order in \( \mathcal{F} \) by defining \( (E_i)_{i \in I} \leq (F_j)_{j \in J} \) whenever \( I \subset J \) and \( E_i = F_i \) for every \( i \in I \).
We notice that $\mathcal{F}$ is inductive since an upper bound for a chain \( \{(E_i)_{i \in I_\alpha} \}_{\alpha \in A} \) in $\mathcal{F}$ is provided by \( (E_i)_{i \in \bigcup\limits_{\alpha \in A} I_\alpha} \).
Hence, Zorn's lemma ensures the existence of a maximal element $(E_i)_{i \in I}$ in $\mathcal{F}$.

Let $F = \R^N \setminus \bigcup\limits_{i \in I} E_i$.
Assume by contradiction that $\mu\restr{F}$ does not take only the values $0$ and $+\infty$. 
In such a case, there exists $E \subset F$ such that $0 < \mu(E) < +\infty$.
Thus, Proposition~\ref{thm4} guarantees the existence of a Borel set $A \subset E$ such that $\mu(A) > 0$ and $\mu\restr{A}$ is $s$-straight.
But then adding $A$ to the family $(E_i)_{i \in I}$ contradicts its maximality in $\mathcal{F}$.
Therefore, $F$ is the required Borel set.
\end{proof}

When \(\mu\) is \(\sigma\)-finite, the index set \(I\) in Theorem~\ref{thmdecomposition2} is countable and the set \(F\) is negligible under \(\mu\).
One then recovers Theorem~\ref{thmdecomposition} for \(\sigma\)-finite measures.
Indeed, since \(\mu\) is \(\sigma\)-finite, we may find a sequence of Borel sets \( (A_n)_{n \in \N} \) of finite measure under \(\mu\) such that \( \R^N = \bigcup\limits_{n \in \N} A_n \).

To see that \(F\) is negligible under \(\mu\), use the subadditivity of \(\mu\) to write \( \mu(F) \leq \sum\limits_{n \in \N} \mu(F \cap A_n) \).
Since \( \mu(F \cap A_n) < +\infty \) for each \( n \in \N \), we have \( \mu(F \cap A_n) = 0 \) by assumption on \(F\).
Thus, \( \mu(F) = 0 \).

To prove that \( I \) is countable, we first observe that \( I \subset \bigcup\limits_{n \in \N} I_n \), where each \( I_n \) denotes the set of indices \( i \in I \) with \( \mu(E_i \cap A_n) > 0 \).
Such an inclusion comes from the fact that \( \mu(E_i) > 0 \) for every \( i \in I \).
To conclude, it then suffices to check that each \( I_n \) is countable, which follows from \( \mu(A_n) < +\infty \).

\medskip

In this paper, we have relied on the original definition of the Hausdorff measure, sometimes called \emph{spherical Hausdorff measure}.
Another common definition uses coverings by arbitrary subsets of \(\R^N\), and is based on the following Hausdorff capacity : 
\[
	\widetilde{\Ha}^s_\delta(A) = \inf\imset{\sum_{n \in \N} (\diam A_n)^s}{E \subset \bigcup_{n \in \N} A_n,~ 0 \leq \diam A_n \leq \delta},
\]
where \( \diam A \) denotes the diameter of the set \( A \subset \R^N \).
The Hausdorff measure \( \widetilde{\Ha}^s \) is then defined accordingly.
The counterparts of Theorems~\ref{thmdecomposition} and~\ref{thmdecomposition2} are true using \( \widetilde{\Ha}^s \) and \( \widetilde{\Ha}^s_\infty \) instead.
The proof of Proposition~\ref{thm4} requires some minor changes.
This comes from the fact that the characterisation of \(s\)-straight measures in Proposition~\ref{characsstraightmes} has to be replaced by the estimate
\[      
        \mu(A) \leq (\diam A)^s \quad \text{for every Borel set \( A \subset \R^N \) with \( 0 \leq \diam A \leq \delta \).}
\]
Equation~\eqref{estimatemurestrA} is thus modified accordingly, and the last part of the proof consists in showing that
\[
	\mu\restr{E}(B) \leq (\diam B)^s \quad \text{for every Borel set \( B \subset \R^N \) with \( \diam B > 0 \).}
\]
For this purpose, in the case \( \diam B \geq r_0 \) we assume that \( \mu(A) \leq r_0^s \) instead of \( \mu(A) \leq \omega_s r_0^s \), while in the case \( 0 < \diam B < r_0 \) we replace \( B_{r + r_{j+1}}(x) \) by \( B' = \compset{x \in \R^N}{\d(x, B) \leq r_{j+1}} \).
The other proofs remain unchanged.


\section{Proof of Theorem~\ref{thmBLOP}}
\label{sectionBLOP}

In this last section, we make use of the decomposition we obtained in Theorem~\ref{thmdecomposition} to prove the existence of a solution in the sense of distributions for the Dirichlet problem \eqref{eqPDE-BLOP}
involving a finite Borel measure \( \nu \) in \( \Omega \subset \R^N \).

To this end, we need an exponential estimate involving the Newtonian potential \(\Newton\nu\).
We recall that \(\Newton\nu : \R^{N} \to [0, +\infty]\) is defined in dimension \(N \ge 3\) for every \(x \in \R^{N}\) by
\[{}
\Newton\nu(x){}
= E * \nu(x){}
= \frac{1}{(N-2)N\omega_{N}} \int_{\Omega} \frac{\d\nu(y)}{|x - y|^{N - 2}},
\]
where \(E\) is the fundamental solution of \(-\Delta\).{}
In dimension \(N = 2\), the Newtonian potential has an analogous definition involving the \(\log\) function.
Assuming that \( \nu \leq \alpha\Ha_{\delta}^{N-2} \) for some \( \alpha < 4\pi \) and \( 0 < \delta \leq +\infty \),
one shows that 
\begin{equation}
\label{eqBLOP-Estimate}
	\e^{\Newton\nu} \in L^{1}_{\mathrm{loc}}(\R^N).
\end{equation}
The proof can be found in \cite{elliptic_pdes}*{Chapter~17} that is modeled upon \cite{BLOP}.
Observe that \eqref{eqBLOP-Estimate} is false for example if \(\nu = 4\pi \Ha^{N-2}\lfloor_{M^{N-2}}\), where \(M^{N-2}\) is a (non-empty) compact manifold of dimension \(N-2\).{}

We begin with a particular case of Theorem~\ref{thmBLOP}:

\begin{lemme}
	\label{lemmaBLOP}
	Theorem~\ref{thmBLOP} holds when \(\nu \le \alpha \Ha_{\delta}^{N-2}\) for some \(\alpha < 4\pi\) and \(0 < \delta \le +\infty\).
\end{lemme}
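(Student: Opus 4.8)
The plan is to solve \eqref{eqPDE-BLOP} by approximation, using the exponential integrability \eqref{eqBLOP-Estimate} as the decisive ingredient, which is available precisely because \(\alpha < 4\pi\). First I would regularise the measure: I choose a sequence of bounded nonnegative densities \(f_n\) such that \(f_n\,\d x\) converges weakly-\(*\) to \(\nu\) and such that the bound \(f_n \le \alpha' \Ha^{N-2}_{\delta'}\) persists for some fixed \(\alpha' < 4\pi\) and \(\delta' > 0\). This can be arranged by mollification: by Proposition~\ref{characsstraightmes} the hypothesis \(\nu \le \alpha\Ha^{N-2}_\delta\) amounts to the ball estimate \(\nu(B_r(x)) \le \alpha\omega_{N-2}r^{N-2}\) for \(r \le \delta\), and this estimate is only mildly degraded when \(\nu\) is convolved with an approximate identity, the margin \(\alpha < 4\pi\) leaving room to keep \(\alpha' < 4\pi\). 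For each \(n\) the regularised problem
\[
-\Delta u_n + (\e^{u_n} - 1) = f_n \quad \text{in } \Omega, \qquad u_n = 0 \text{ on } \partial\Omega,
\]
has a solution \(u_n \in H^1_0(\Omega) \cap L^\infty(\Omega)\) by the classical theory for semilinear equations with bounded data, for instance via the method of sub- and supersolutions, with \(0\) as a subsolution and the solution of the linear problem as a supersolution.

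Next I would collect the uniform a priori estimates. The maximum principle gives \(0 \le u_n\): testing the equation against \(u_n^-\) and using the sign of \(\e^{u_n} - 1\) and of \(f_n\) forces \(u_n^- = 0\). Comparing with the solution \(w_n\) of the linear problem \(-\Delta w_n = f_n\), \(w_n = 0\) on \(\partial\Omega\), and using \(\e^{u_n} - 1 \ge 0\), one gets \(0 \le u_n \le w_n\); writing \(w_n\) through the Green function of \(\Omega\), which is dominated by the fundamental solution of \(-\Delta\), yields the pointwise bound \(\e^{u_n} \le \e^{w_n} \le \e^{\Newton f_n}\). Integrating the equation over \(\Omega\) and using \(u_n \ge 0 = u_n|_{\partial\Omega}\) to discard the nonpositive boundary flux gives the uniform bound \(\int_\Omega \e^{u_n} \le f_n(\Omega) + |\Omega| \le \nu(\Omega) + |\Omega|\). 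Consequently the right-hand side \(f_n - (\e^{u_n} - 1)\) is bounded in \(L^1(\Omega)\), and the standard estimates for equations with \(L^1\) data provide a uniform bound for \(u_n\) in \(W^{1,q}_0(\Omega)\) for every \(q < N/(N-1)\).

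I would then pass to the limit. Up to a subsequence, \(u_n\) converges weakly in \(W^{1,q}_0\) and almost everywhere to some \(u \in W^{1,1}_0(\Omega)\), so \(\e^{u_n} \to \e^{u}\) almost everywhere. The crux is to upgrade this to convergence in \(L^1\), that is, to prove that \((\e^{u_n})_n\) is equi-integrable; this is exactly where the strict inequality \(\alpha < 4\pi\) enters. Choosing \(\beta > 1\) with \(\beta\alpha' < 4\pi\), the estimate \(\beta f_n \le \beta\alpha' \Ha^{N-2}_{\delta'}\) together with \eqref{eqBLOP-Estimate} (applied uniformly as in \cite{elliptic_pdes}*{Chapter~17}) provides a bound \(\int_K \e^{\beta\Newton f_n} \le C_K\) on every compact set \(K\). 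Since \(\e^{u_n} \le \e^{\Newton f_n}\), this uniform higher integrability yields equi-integrability of \((\e^{u_n})_n\) by the de la Vallée-Poussin criterion, whence \(\e^{u_n} \to \e^{u}\) in \(L^1(\Omega)\) and in particular \(\e^{u} \in L^1(\Omega)\). Passing to the limit in the weak formulation then shows that \(u\) solves \eqref{eq-Distributions}; the case \(N = 2\) is identical after replacing the fundamental solution by its logarithmic analogue.

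The main obstacle is precisely the equi-integrability of the exponential term across the approximation: the almost everywhere convergence and the uniform \(L^1\) bound alone are not enough to pass to the limit in \(\e^{u_n}\), and it is only the uniform higher integrability furnished by \eqref{eqBLOP-Estimate} under the strict bound \(\alpha < 4\pi\) that closes the argument. A secondary technical point is ensuring that the regularisation \(f_n\) of \(\nu\) can be chosen so as to retain a subcritical constant \(\alpha' < 4\pi\); this is where the margin in the hypothesis, rather than the borderline case \(\alpha = 4\pi\), is essential, and it explains why this lemma is stated only for \(\alpha < 4\pi\).
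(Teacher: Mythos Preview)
Your approach is correct and essentially matches the paper's: regularise \(\nu\) by mollification, solve the approximate problems, derive compactness from the Littman--Stampacchia--Weinberger estimates, bound \(u_n\) pointwise by the Newtonian potential, and pass to the limit. The paper's endgame is slightly simpler than your equi-integrability argument: taking the mollifiers radially decreasing and using that the fundamental solution \(E\) is superharmonic gives \(\Newton(\rho_k * \nu) = (E * \rho_k) * \nu \le E * \nu = \Newton\nu\), so a single dominating function \(\e^{\Newton\nu} \in L^{1}(\Omega)\) is available and dominated convergence applies directly---this also dispenses with your tracking of the density constant \(\alpha'\) under mollification.
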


The proof of Lemma~\ref{lemmaBLOP} relies on various properties about Dirichlet problems with an absorption term that includes \eqref{eqPDE-BLOP} as a particular case.
We refer the reader to the book \cite{elliptic_pdes}, more specifically Chapters~4, 5, 6 and 19, for details.

\begin{proof}[Proof of Lemma~\ref{lemmaBLOP}]
	Take a sequence of radially decreasing mollifiers \((\rho_{k})_{k \in \N}\) in \(\R^{N}\).{}
	For each \(k \in \N\), let \(u_{k}\) satisfy
	\begin{equation}
	\label{mollified-eqPDE-BLOP}
	\left\{
\begin{alignedat}{2}
	- \Delta u_{k} + (\mathrm{e}^{u_{k}} - 1) & = \rho_{k} * \nu &&\quad \text{in \(\Omega\),}\\
	u_{k} & = 0 &&\quad \text{on \(\partial\Omega\).}
\end{alignedat}
\right.
	\end{equation}
	The existence of \(u_{k}\) can be deduced by minimisation of the functional \(\mathcal{E} : W_{0}^{1, 2}(\Omega) \to (-\infty, +\infty]{}\) defined by
	\[{}
	\mathcal{E}(v)
	= \frac{1}{2} \int_{\Omega} |\nabla v|^{2} + \int_{\Omega} (\e^{v} - v)
	- \int_{\Omega} (\rho_{k} * \nu) v.
	\]
	From the Euler-Lagrange equation satisfied by \( u_{k} \), we have
	\begin{equation}
		\label{eq-794}
	- \int_{\Omega} u_{k} \Delta\varphi + \int_{\Omega} (\e^{u_{k}} - 1) \varphi{}
	= \int_{\Omega} (\rho_{k} * \nu) \varphi{}
	\quad \text{for every \(\varphi \in C_{\mathrm{c}}^{\infty}(\Omega)\)},
	\end{equation}
	where \( C_{\mathrm{c}}^{\infty}(\Omega) \) is the set of smooth functions with compact support in \( \Omega \).
	Using the absorption estimate
	\[{}
	\|\mathrm{e}^{u_{k}} - 1\|_{L^{1}(\Omega)}
	\le \|\rho_{k} * \nu\|_{L^{1}(\Omega)} 
	\le \nu(\Omega),
	\]
	we deduce from the equation and the triangle inequality that
	\[{}
	\|\Delta u_{k}\|_{L^{1}(\Omega)}
	\leq \|\rho_{k} * \nu\|_{L^{1}(\Omega)} + \|\mathrm{e}^{u_{k}} - 1\|_{L^{1}(\Omega)}
	\le 2 \nu(\Omega).
	\]
	Littman-Stampacchia-Weinberger's estimate ensures that \((u_{k})_{k \in \N}\) is a bounded sequence in \(W_{0}^{1, p}(\Omega)\) for every \( 1 \leq p < \frac{N}{N-1} \).{}
	By the Rellich-Kondrashov Compactness Theorem~\cite{willem}*{Theorem~6.4.6}, there exist a subsequence \((u_{k_{j}})_{j \in \N}\) and \(u \in W_{0}^{1, 1}(\Omega)\) such that
	\(u_{k_{j}} \to u\) in \(L^{1}(\Omega)\) and almost everywhere in \(\Omega\).{}
	
	Since \(\Newton(\rho_{k} * \nu)\) is a supersolution of the Dirichlet problem \eqref{mollified-eqPDE-BLOP}, by comparison we have
	\[{}
	0 \le u_{k} \le \Newton(\rho_{k} * \nu).
	\]
	Since the fundamental solution \(E\) is superharmonic and \( \rho_{k} \) is radially decreasing, \( E * \rho_{k} \leq E \).
	An application of Fubini's theorem then gives
	\[{}
	\Newton(\rho_{k} * \nu){}
	= E * (\rho_{k} * \nu) 
	= (E * \rho_{k}) * \nu{}
	\le E * \nu{}
	= \Newton\nu.
	\]
	Hence,
	\[{}
	0 \le u_{k} \le \Newton\nu.
	\]
	By the density assumption satisfied by \(\nu\), we can apply estimate \eqref{eqBLOP-Estimate} to deduce that \(\mathrm{e}^{\Newton\nu} \in L^{1}(\Omega)\).{}
	Therefore, the Dominated Convergence Theorem ensures that \(\mathrm{e}^{u_{k_{j}}} \to \mathrm{e}^{u}\) in \(L^{1}(\Omega)\).
	The conclusion then follows as we take \(k = k_{j}\) in \eqref{eq-794} and let \(j \to \infty\). 
\end{proof}

	We now present a localised reformulation of Theorem~\ref{thmdecomposition}:

\begin{lemme}
	\label{lemmaDecompositionLocal}
	If \(\mu \le \Ha^{s}\), then there exist a non-increasing sequence of open sets \((U_{j})_{j \in \N}\) and a sequence of positive numbers \((\delta_{j})_{j \in \N}\) such that \(\mu(U_{j}) \to 0\) and
	\[{}
	\mu\lfloor_{\R^{N} \setminus U_{j}}{} \le \Ha_{\delta_{j}}^{s}
	\quad \text{for every \(j \in \N\).}
	\]
\end{lemme}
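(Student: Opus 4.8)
The plan is to derive the statement from Theorem~\ref{thmdecomposition} by a compactness-and-separation argument. The essential point is that I will use the \emph{factor-free} estimate $\mu\restr{E_n} \le \Ha^s_\infty$ provided by that theorem, and not the approximate bound $\mu\restr{A} \le \beta\Ha^s_\delta$ of Proposition~\ref{prop1415}: as explained below, the condition $\mu\restr{K} \le \Ha^s_\delta$ is not preserved under finite unions, so keeping track of sharp constants is what makes the argument work.

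First I would apply Theorem~\ref{thmdecomposition} to obtain disjoint Borel sets $(E_n)_{n \in \N}$ with $\R^N = \bigcup_{n \in \N} E_n$ and $\mu\restr{E_n} \le \Ha^s_\infty$ for each $n$. Since $\sum_{n \in \N} \mu(E_n) = \mu(\R^N) < +\infty$, the tails $\sum_{n > m} \mu(E_n)$ tend to $0$ as $m \to \infty$. The naive idea of letting $U_j$ be a neighbourhood of the tail $\bigcup_{n > m} E_n$ fails, however, because a finite union $\bigcup_{n \le m} E_n$ need not satisfy $\mu\restr{\,\cdot\,} \le \Ha^s_\delta$: on a single small ball the masses of the $m+1$ pieces add up, each only bounded by $\omega_s r^s$, which produces a spurious factor $m+1$.

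The observation that removes this factor is \emph{separation at small scales}. Using that $\mu$ is a finite Borel measure on $\R^N$, hence inner regular with respect to compact sets, I would replace each $E_n$ (for $n \le m$) by a compact subset $\tilde E_n \subset E_n$ carrying almost all of its mass. Finitely many pairwise disjoint compact sets are separated by a positive distance $d > 0$; therefore, with $\delta = d/2$, every ball $B_r(x)$ with $r \le \delta$ meets at most one of the $\tilde E_n$, say $\tilde E_{n_0}$, so that
\[
	\mu\bigl(\bigl(\textstyle\bigcup_{n \le m} \tilde E_n\bigr) \cap B_r(x)\bigr) = \mu(\tilde E_{n_0} \cap B_r(x)) \le \mu(E_{n_0} \cap B_r(x)) \le \omega_s r^s .
\]
By Proposition~\ref{characsstraightmes}, this gives $\mu\restr{K} \le \Ha^s_\delta$ for the compact set $K = \bigcup_{n \le m} \tilde E_n$, with no factor. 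It then remains to organise this into a sequence: I would pick $m_j \to \infty$ and, for each $n$, an increasing (in $j$) family of compact sets $\tilde E_n^{(j)} \subset E_n$ exhausting $E_n$ in measure, and set $K_j = \bigcup_{n \le m_j} \tilde E_n^{(j)}$, $U_j = \R^N \setminus K_j$, and $\delta_j = \tfrac12 \min_{a < b \le m_j} \operatorname{dist}(\tilde E_a^{(j)}, \tilde E_b^{(j)}) > 0$. The monotonicity $m_j \nearrow$ together with $\tilde E_n^{(j)} \nearrow$ yields $K_j \subset K_{j+1}$, so that $(U_j)_{j \in \N}$ is non-increasing, while $\mu(U_j) \le \sum_{n \le m_j} \mu(E_n \setminus \tilde E_n^{(j)}) + \sum_{n > m_j} \mu(E_n)$, both terms being driven to $0$ by taking $m_j$ large and the compact approximations accurate; a routine diagonal bookkeeping secures $\mu(U_j) \to 0$.

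I expect the main obstacle to be exactly the failure of $\mu\restr{\,\cdot\,} \le \Ha^s_\delta$ under finite unions. The whole argument hinges on circumventing it through the positive separation of disjoint compact sets, which is precisely what forces the combined use of inner regularity and of the sharp estimate from Theorem~\ref{thmdecomposition} in place of Proposition~\ref{prop1415}.
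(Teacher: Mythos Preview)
Your proposal is correct and follows essentially the same route as the paper's proof: both apply Theorem~\ref{thmdecomposition}, replace the $E_n$ by disjoint compact subsets via inner regularity, and exploit the positive separation of finitely many disjoint compacts so that any sufficiently small ball meets at most one piece, whence the finite union $K$ satisfies $\mu\restr{K}\le\Ha^s_\delta$ with no spurious multiplicative factor. The only cosmetic difference lies in how the non-increasing property of $(U_j)_{j\in\N}$ is obtained: the paper first builds a sequence without monotonicity and then replaces each $U_j$ by the tail union $\bigcup_{k\ge j}U_k$ (after imposing $\sum_j\varepsilon_j<\infty$), whereas you build monotonicity directly by nesting the compact approximations $\tilde E_n^{(j)}$.
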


\begin{proof}
	Let \(\varepsilon > 0\) and consider a decomposition \(\R^{N} = \bigcup\limits_{n \in \N}{E_{n}}\) given by Theorem~\ref{thmdecomposition}.
	For each \(n \in \N\), by inner regularity of \(\mu\) there exists a compact set \(F_{n} \subset E_{n}\) with \(\mu(E_{n} \setminus F_{n}) \le \varepsilon/2^{n+1}\).{}
	We have \(\mu\Bigl(\R^{N} \setminus \bigcup\limits_{n \in \N}F_{n}\Bigr) \le \varepsilon\).{}
	By construction, for any \( \eta > 0 \) we have
	\[{}
	\mu\lfloor_{F_{n}}{}
	\le \Ha_{\infty}^{s}
	\le \Ha_{\eta}^{s}.
	\]
	Given \(k \in \N_{*}\), choosing \(\eta = \eta_{k} := \frac{1}{2}\min{\{\d(F_{\alpha}, F_{\beta}) : 0 \le \alpha < \beta \le k\}}\) in the inequality above implies that
	\[{}
		\mu\lfloor_{\bigcup\limits_{n = 0}^{k}{F_{n}}}
		\le \Ha_{\eta_{k}}^{s}.
	\]
	Thus, for \(k\) sufficiently large, the open set \(U = \R^{N} \setminus \bigcup\limits_{n = 0}^{k}{F_{n}}\) is such that 
	\[{}
	\mu(U) \le 2\varepsilon{}
	\quad \text{and} \quad{}
	\mu\lfloor_{\R^N \setminus U}{} \le \Ha_{\eta_{k}}^{s}.
	\]
	It now suffices to apply this conclusion to a sequence \(\varepsilon_{j} \to 0\) to find a sequence \((U_{j})_{j \in \N}\) satisfying the conclusion, except for the fact that it need not be non-increasing.
	To achieve such an additional property, we further impose that \(\sum\limits_{j = 0}^{\infty}{\varepsilon_{j}} < +\infty\). Thus, \(\sum\limits_{j = 0}^{\infty}{\mu(U_{j})} < +\infty\).{}
	For each \(n \in \N\), let \(O_{n} = \bigcup\limits_{j = n}^{\infty}{U_{j}}\).{}
	The sequence \((O_{n})_{n \in \N}\) satisfies the required properties.
\end{proof}

\begin{proof}[Proof of Theorem~\ref{thmBLOP}]
	We apply Lemma~\ref{lemmaDecompositionLocal} with \(\mu = \nu/4\pi\) to obtain a non-increasing sequence of open sets \((U_{j})_{j \in \N}\) such that \(\nu(U_{j}) \to 0\) and
	\[{}
	\nu\lfloor_{\Omega \setminus U_{j}}{} \le 4\pi \Ha_{\delta_{j}}^{N-2}
	\quad \text{for every \(j \in \N\).}
	\]
	Let \((\beta_{j})_{j \in \N}\) be an increasing sequence of positive numbers such that \(\beta_{j} \to 1\).{}
	For each \(j \in \N\), Lemma~\ref{lemmaBLOP} provides \(w_{j}\) such that
	\[{}
	\left\{
\begin{alignedat}{2}
	- \Delta w_{j} + (\mathrm{e}^{w_{j}} - 1) & = \beta_{j} \nu\lfloor_{\Omega \setminus U_{j}} &&\quad \text{in \(\Omega\),}\\
	w_{j} & = 0 &&\quad \text{on \(\partial\Omega\).}
\end{alignedat}
\right.
	\]
	By monotonicity of \((\beta_{j}\nu\lfloor_{\Omega \setminus U_{j}})_{j \in \N}\) and comparison of solutions, the sequence \((w_{j})_{j \in \N}\) is nondecreasing.
	Denote its pointwise limit by \(w\).
	Using the absorption estimate, we find
	\begin{equation}{}
	\label{firstInequality}
	\|\mathrm{e}^{w_{j}} - 1\|_{L^{1}(\Omega)}
	\le \beta_{j} \nu(\Omega\setminus U_{j})
	\le \nu(\Omega)
	\end{equation}
	and then, by the triangle inequality,
	\begin{equation}{}
	\label{secondInequality}
	\|\Delta w_{j}\|_{L^{1}(\Omega)}
	\le 2 \nu(\Omega).
	\end{equation}
	From \eqref{secondInequality}, we have by Littman-Stampacchia-Weinberger's estimate that \(w \in W_{0}^{1, 1}(\Omega)\).{}
	From \eqref{firstInequality} and Fatou's lemma, we have \(\mathrm{e}^{w} \in L^{1}(\Omega)\).{}
	As \(j \to \infty\) in the integral identity satisfied by \(w_{j}\), we deduce that \(w\) is a solution of the equation in the sense of distributions in \(\Omega\).
\end{proof}


\begin{bibdiv}

\begin{biblist}
	
\bib{BLOP}{article}{
   author={Bartolucci, Daniele},
   author={Leoni, Fabiana},
   author={Orsina, Luigi},
   author={Ponce, Augusto C.},
   title={Semilinear equations with exponential nonlinearity and measure
   data},
   journal={Ann. Inst. H. Poincar\'{e} Anal. Non Lin\'{e}aire},
   volume={22},
   date={2005},
   number={6},
   pages={799--815},
}	

\bib{BMP}{article}{
   author={Brezis, Haïm},
   author={Marcus, Marcus},
   author={Ponce, Augusto C.},
   title={Nonlinear elliptic equations with measures revisited},
   conference={
      title={Mathematical aspects of nonlinear dispersive equations},
   },
   book={
      series={Ann. of Math. Stud.},
      volume={163},
      publisher={Princeton Univ. Press, Princeton, NJ},
   },
   date={2007},
   pages={55--109},
}

\bib{delaware}{article}{
   author={Delaware, Richard},
   title={Every set of finite Hausdorff measure is a countable union of sets whose Hausdorff measure and content coincide},
   journal={Proc. Amer. Math. Soc.},
   volume={131},
   date={2002},
   pages={2537--2542},
}

\bib{foran}{article}{
        author={Foran, James},
        title={Measure preserving continuous straightening of fractional dimensional sets},
        journal={Real Anal. Exchange (2)},
        volume={21},
        date={1995--96},
        pages={732--738},
}

\bib{hausdorff}{article}{
        author={Hausdorff, Félix},
        title={Dimension und äußeres Maß.},
        journal={Math. Ann.},
        volume={79},
        date={1918},
        number={1--2},
        pages={157--179},
}

\bib{elliptic_pdes}{book}{
   author={Ponce, Augusto C.},
   title={Elliptic PDEs, measures and capacities},
   series={EMS Tracts in Mathematics, 23},
   publisher={European Mathematical Society},
   date={2016},
   address={Zurich},
}

\bib{vazquez}{article}{
   author={V\'{a}zquez, Juan L.},
   title={On a semilinear equation in $\mathbb{R}^{2}$ involving bounded
   measures},
   journal={Proc. Roy. Soc. Edinburgh Sect. A},
   volume={95},
   date={1983},
   number={3-4},
   pages={181--202},
}

\bib{willem}{book}{
   author={Willem, Michel},
   title={Functional analysis. Fundamentals and applications},
   series={Cornerstones},
   publisher={Birkh\"{a}user/Springer, New York},
   date={2013},
}

\end{biblist}

\end{bibdiv}

\end{document}